\theoremstyle{plain} 
\newtheorem{thm}{Theorem} 
\newtheorem{cor}[thm]{Corollary} 
\newtheorem{lem}[thm]{Lemma}
\theoremstyle{definition} 
\newtheorem{defn}[thm]{Definition} 
\newtheorem{rem}[thm]{Remark} 
\newtheorem{ex}[thm]{Example} 
\newtheorem{exs}[thm]{Examples} 
\newcommand{\iinfty}{{\mathchoice
{\begin{minipage}{.15in}\includegraphics[width=.15in]{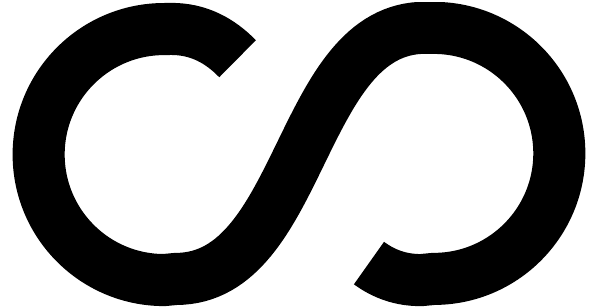}\end{minipage}}
{\begin{minipage}{.12in}\includegraphics[width=.12in]{infty2.pdf}\end{minipage}}
{\begin{minipage}{.10in}\includegraphics[width=.10in]{infty2.pdf}\end{minipage}}
{\begin{minipage}{.08in}\includegraphics[width=.08in]{infty2.pdf}\end{minipage}}
}}
\renewcommand{\int}{\operatorname{int}}
\newcommand{\id}{\operatorname{id}}
\newcommand{\im}{\operatorname{Im}}
\newcommand\sL{\text{\sf L}}
\newcommand{\Z}{\mathbb{Z}} 
\newcommand{\R}{\mathbb{R}}
\newcommand{\bL}{\mathbb{L}} 
\newcommand{\bT}{\mathbb{T}}
\newcommand{\g}{\mathfrak{g}}
\newcommand{\cL}{\mathcal{L}} 
\newcommand{\cT}{\mathcal{T}} 
\newcommand{\cW}{\mathcal{W}}
\newcommand{\sra}{\twoheadrightarrow} 
\newcommand{\ra}{\longrightarrow}
\newcommand{\QF}{\text{\sf{QF}}}
\newcommand{\CQF}{\text{\sf{CQF}}}
\newcommand{\SQF}{\text{\sf{SQF}}}
\newcommand{\SF}{\text{\sf{SF}}}
\newcommand{\HF}{\text{\sf{HF}}}
\newcommand{\QR}{\text{\sf{QR}}}
\newcommand{\sg}{\operatorname{signature}}
\newcommand{\z}{\mathbb Z_2}
\begin{document} 

\title{Universal quadratic forms and Whitney tower intersection invariants} 

\dedicatory{Dedicated to Mike Freedman on the occasion of his 60th birthday.}

\begin{abstract} 
We first remind the reader of a simple geometric description of the Kirby-Siebenmann invariant of a 4--manifold in terms of a quadratic refinement of its intersection form. This is the first in a sequence of higher-order intersection invariants of Whitney towers, studied by the authors, particularly for the 4--ball. 

In the second part of this paper, a general theory of quadratic forms is developed and then specialized from the non-commutative to the commutative to finally, the symmetric settings. The intersection invariant for twisted Whitney towers is shown to be the universal symmetric refinement of the framed intersection invariant. As a corollary we obtain a short exact sequence that has been essential in the understanding of Whitney towers in the 4--ball.
\end{abstract}

\author[J. Conant]{James Conant} 
\email{jconant@math.utk.edu} 
\address{Dept. of Mathematics, University of Tennessee, Knoxville, TN} 

\author[R. Schneiderman]{Rob Schneiderman} 
\email{robert.schneiderman@lehman.cuny.edu} 
\address{Dept. of Mathematics and Computer Science, Lehman College, City University of New York, Bronx, NY} 

\author[P. Teichner]{Peter Teichner} 
\email{teichner@mac.com} 
\address{Dept. of Mathematics, University of California, Berkeley, CA and} 
\address{Max-Planck Institut f\"ur Mathematik, Bonn, Germany}

\keywords{Whitney towers, twisted Whitney towers, quadratic refinements, Arf invariants, Lie algebra} 

\maketitle 
\section{Introduction}
A beautiful consequence of Mike Freedman's disk embedding theorem is the existence of non-smoothable $4$-manifolds. In the easiest setting, his result can be stated as follows.

\begin{thm}\label{thm:classification}
Any odd unimodular symmetric form $\lambda:\Z^m \otimes \Z^m \to \Z$ is realized as the intersection form of exactly {\em two} closed simply-connected oriented 4--manifolds (up to homeomorphism). These 4--manifolds are homotopy equivalent and are distinguished by the following (equivalent) criteria: Exactly one of the manifolds \dots
\begin{enumerate}
\item \dots is smoothable after crossing with $\R$.
\item \dots is smoothable after connected sum with finitely many copies of $S^2 \times S^2$.
\item \dots has a linear reduction of its micro normal bundle.
\item\dots has vanishing Kirby-Siebenmann invariant in $\Z_2$.
\item\dots exhibits the following formula for a quadratic refinement $\tau$ of $\lambda$:
\[
\tau(c) \equiv \frac{\lambda(c,c) - \sg(\lambda)}{8}  \mod 2 \quad \forall \text{ characteristic elements } c.
\] 
\end{enumerate}
\end{thm}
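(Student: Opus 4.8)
The plan is to deduce the entire statement from Freedman's classification of closed simply-connected topological $4$--manifolds, using the Kirby-Siebenmann invariant $\mathrm{ks}(M)\in\Z_2$ as the single quantity around which all five criteria are organized.

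First I would settle realization and the count. Freedman's theorem says that a closed simply-connected topological $4$--manifold is determined up to homeomorphism by the pair $(\lambda,\mathrm{ks})$ of its intersection form and its Kirby-Siebenmann invariant, and that every such pair occurs subject only to the constraint that an \emph{even} form forces $\mathrm{ks}\equiv\sg(\lambda)/8\bmod 2$. Since $\lambda$ is assumed \emph{odd}, this constraint is vacuous, so both $\mathrm{ks}=0$ and $\mathrm{ks}=1$ are realized and there are exactly two manifolds. That they are homotopy equivalent is the older theorem of Whitehead and Milnor: the homotopy type of a closed simply-connected $4$--manifold depends only on the isomorphism class of $\lambda$. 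It then suffices to prove that each of (i)--(v) is equivalent to $\mathrm{ks}(M)=0$, since this isolates a single one of the two manifolds.

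The equivalences (iii) $\Leftrightarrow$ (iv) and (i) $\Leftrightarrow$ (iv) I would read off from Kirby-Siebenmann smoothing theory. By definition $\mathrm{ks}(M)$ is the obstruction, living in $H^4(M;\Z_2)=\Z_2$, to a vector-bundle (linear) reduction of the stable normal microbundle, so (iii) simply restates (iv). For (i), the product $M\times\R$ is a $5$--manifold for which smoothing theory applies with no higher obstruction (as $H^{\ge 5}(M)=0$): it is smoothable iff its stable tangent bundle admits a linear reduction, and crossing with $\R$ does not change the invariant, giving (i) $\Leftrightarrow$ (iv). For (ii) $\Leftrightarrow$ (iv) I would use additivity of $\mathrm{ks}$ under connected sum together with $\mathrm{ks}(S^2\times S^2)=0$: a smoothable connected sum forces $\mathrm{ks}(M)=0$, while conversely, when $\mathrm{ks}(M)=0$ the manifold is stably smoothable and summing with enough copies of the smooth manifold $S^2\times S^2$ promotes the stable smoothing to an honest smooth structure.

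The substantive equivalence is (v) $\Leftrightarrow$ (iv), and I expect this to be the main obstacle, since it is where the geometry of characteristic surfaces replaces formal bundle theory. Here one builds the quadratic refinement $\tau$ of the mod $2$ reduction of $\lambda$ from normal framing data along surfaces representing characteristic elements $c$, and invokes the \emph{topological} Guillou-Marin / Freedman-Kirby congruence, in which $\mathrm{ks}(M)$ enters as a correction term of the shape
\[
\sg(\lambda)\ \equiv\ \lambda(c,c)-8\bigl(\tau(c)+\mathrm{ks}(M)\bigr)\ \ \bmod 16 .
\]
Since $\lambda(c,c)\equiv\sg(\lambda)\bmod 8$ for characteristic $c$ (van der Blij), the quantity $(\lambda(c,c)-\sg(\lambda))/8$ is an integer, and reducing the displayed congruence shows that $\tau(c)\equiv(\lambda(c,c)-\sg(\lambda))/8\bmod 2$ holds for \emph{all} characteristic $c$ exactly when $\mathrm{ks}(M)=0$. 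Establishing that $\tau$ is well defined and that the congruence carries precisely this $\mathrm{ks}$--term is the delicate point, and it is exactly the quadratic-refinement perspective that the body of the paper is designed to supply.
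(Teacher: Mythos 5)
Your proposal is correct and follows essentially the same route as the paper: it rests on Freedman's classification of simply-connected topological $4$--manifolds by the pair $(\lambda, KS)$, and reduces criterion (v) to the generalized Rohlin congruence $KS(M) \equiv \tau_M(c) + \tfrac{1}{8}(\lambda(c,c)-\sg(\lambda)) \bmod 2$ of Freedman--Kirby, which is exactly the formula the paper quotes from \cite{FK}. The paper likewise treats this congruence as input from the literature, its own contribution being the geometric description of $\tau_M(c)$ via Whitney-disk intersections in Lemma~\ref{lem:c}.
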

\begin{rem} By Donaldson's Theorem A \cite{D}, {\em exactly} the diagonalizable odd forms $\lambda$ are realized by closed {\em smooth} 4--manifolds. Diagonal forms are realized by connected sums of complex projective planes (with varying orientations); in fact, most such forms are now known to admit infinitely many smooth representatives (all being homeomorphic by the above theorem), 
see e.g.~\cite{FS}.
\end{rem}

Criterion (v) is the most elementary and will be explained in detail in Section~\ref{sec:KS}. The key is the following geometric interpretation for the quadratic refinement $\tau(c)$. In a simply-connected closed 4--manifold $M$, any class in $H_2(M)$ can be represented by a (topologically generic) immersed sphere $S: S^2 \to M$. This means that $S$ looks locally like $\R^2 \times 0 \subset \R^4$, except for finitely many double points around which $S$ looks like $\R^2 \times 0 \cup 0 \times \R^2 \subset \R^4$. One can add more local self-intersection points  to $S$ until their algebraic sum is zero. This implies that one can choose {\em Whitney disks} $W_i$, pairing all these self-intersection points. These are (topologically generic) immersed disks $W_i:D^2 \to M$ whose boundary consists of two arcs, each going between the two intersection points but on different sheets, see Figure~\ref{fig:Whitney-move}. 

\begin{figure}[h]
        \centerline{\includegraphics[scale=.5]{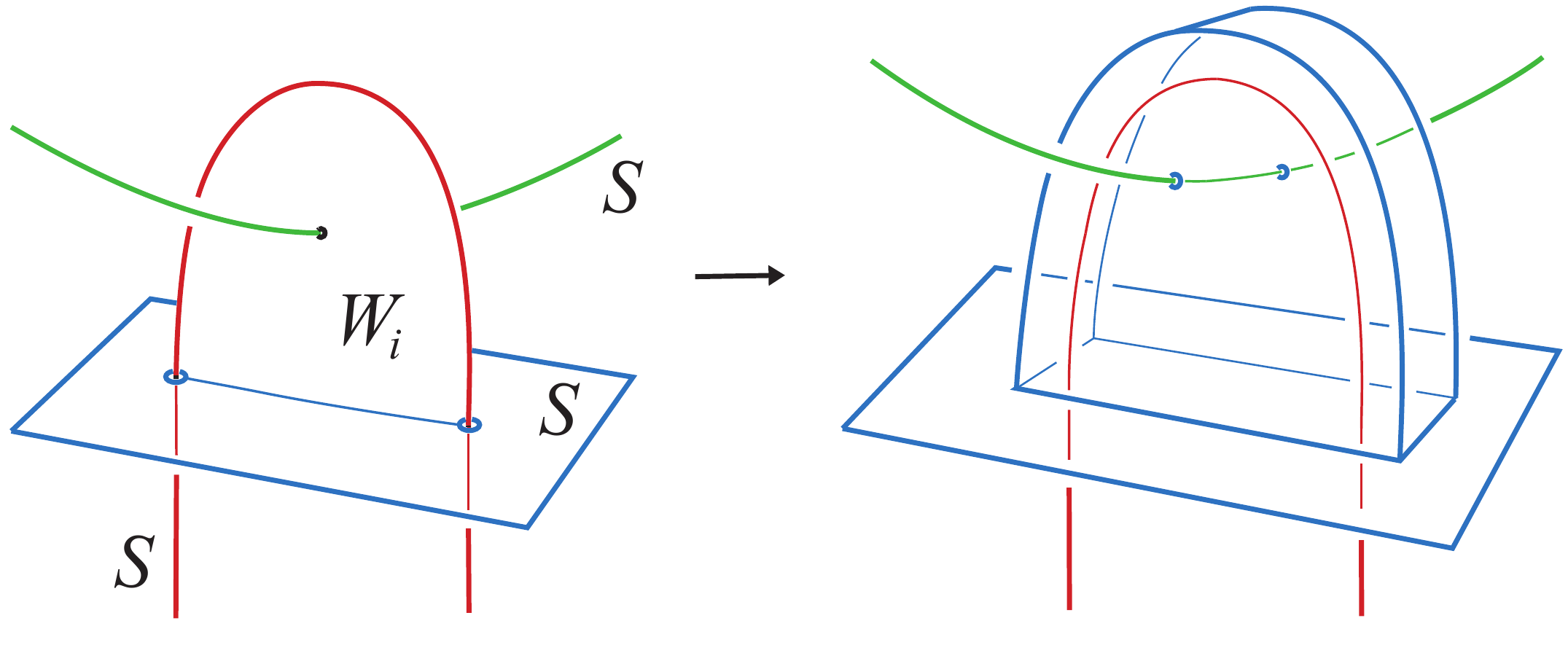}}
        \caption{A (framed) Whitney disk and a Whitney move.}
        \label{fig:Whitney-move}
\end{figure}

We will explain in Lemma~\ref{lem:c} why $\tau(c)$ equals an intersection invariant $\tau_1(S,W_i)$, computed by summing the (topologically generic) intersections between an immersed sphere $S$, representing the characteristic $c\in H_2(M) = \Z^m$, and (the interiors of) framed Whitney disks $W_i$ for $S$:
\[
\tau(c) = \tau_1(S,W_i) := \sum_i \#\{S\pitchfork W_i\}  \quad \mod 2
\]
In \cite{FQ}, this invariant was called the {\em Kervaire-Milnor invariant}  because these authors first proved Rohlin's formula below \cite{KM} for the case where $M$ is smooth and $c$ is represented by an embedded sphere, implying the properties $\tau(c)=0=KS(M)$.  

\begin{rem}\label{rem:framing}
The figure above shows a {\em framed} Whitney disk $W_i$ in the sense that there are two {\em disjoint} parallel copies of $W_i$, as needed for the Whitney move on the right hand side. In general, a Whitney disk comes with a framing of its boundary and hence admits a well defined Euler number in $\Z$, its {\em twist}. The operation of {\em boundary twisting} \cite{FQ} allows to assume that all Whitney disks are framed, i.e.~have twist zero. Moreover, one can also assume that the $W_i$ are (disjointly) embedded disks, by pushing all (self)-intersections off the boundary.
\end{rem}

A generalization of Rohlin's theorem \cite{FK} says that this geometric invariant determines the Kirby-Siebenmann invariant of a closed oriented 4--manifold $M$ by the formula
\[
KS(M) \equiv \tau_M(c) + \frac{\lambda_M(c,c) - \sigma(\lambda_M)}{8}  \mod 2
\]
explaining the equivalence of criteria (iv) and (v) above. In Section~\ref{sec:KS} we'll recall a definition of $\tau_M$ which makes the above formula hold for all closed oriented 4--manifolds $M$ (without assuming that $c$ is spherical).

The 2-complex $\cW:=S\cup W_i$ in $M$ is referred to as a {\em Whitney tower} of order 1, built on $S$, with order 1 Whitney disks $W_i$. The invariant $\tau_1(\cW)=\tau_1(S,W_i)$ used above is the first {\em intersection invariant} of such Whitney towers. It has again {\em order 1}, the order zero intersection invariants being given by the intersection form $\lambda_M$. In a sequence of papers, the authors generalized this invariant to higher orders, see for example our survey \cite{CST0}. 

The idea is that if $\tau_1(\cW)$ vanishes then all intersections between $S$ and $W_i$ can be paired by {\em order 2} Whitney disks $W_{i,j}$ and there should be a second-order intersection invariant $\tau_2(\cW,W_{i,j})$ measuring the obstruction for finding order 3 Whitney disks, and so on.

In \cite{CST1} we worked out this higher-order intersection theory in detail for Whitney towers built on immersed disks in the $4$--ball bounded by framed links in the $3$--sphere. In this simply connected setting the invariant $\tau_n(\cW)$ of an order $n$ (framed) Whitney tower $\cW$ takes values in an abelian group $\cT_k(m)$ (where $m$ is number of link components), and the vanishing of $\tau_n(\cW)$ implies that the link bounds and order $n+1$ Whitney tower. For links bounding \emph{twisted} Whitney towers there is an analogous obstruction theory and intersection invariant
$\tau^\iinfty_k(\cW)\in\cT^\iinfty_k(m)$, and in the second part of this paper we develop an algebraic theory of quadratic forms, leading to a beautiful relation between these framed and twisted obstruction groups, spelled out in Theorem~\ref{thm:exact}. This result is used in the computation of the Whitney tower filtration on classical links described in \cite{CST1}.

{\bf Acknowledgments:} 
The main part of this paper was written while the first two authors were visiting the third author at the Max-Planck-Institut f\"ur Mathematik in Bonn. They all thank MPIM for its stimulating research environment and generous support. The third author was also supported by NSF grants DMS-0806052 and DMS-0757312.

\section{Combinatorial approach to the Kirby-Siebenmann invariant} \label{sec:KS}
In the notation of the introduction, recall that a class $c\in \Z^m$ is called {\em characteristic} if
\[
\lambda(c,x) \equiv \lambda(x,x) \mod 2 \quad \forall x\in \Z^m
\]
The set $C(\lambda)$ of characteristic elements is a $\Z^m$-torsor via the action $(c,x)\mapsto c+2x$. 

A closed oriented 4--manifold $M$ with $(H_2M,\lambda_M) = (\Z^m,\lambda)$ defines a {\em quadratic refinement} $\tau_M:C(\lambda)\to\Z_2$ of $\lambda$ in the sense that 
\[
\tau_M(c+2x) \equiv \frac{\lambda(c,x)-\lambda(x,x)}{2} \mod 2
\]
This means that $\tau_M$ is completely determined by {\em one} of its values (and that value is determined by the Kirby-Siebenmann invariant of $M$ via Rohlin's theorem above). If $\lambda$ is even then $c=0$ is characteristic and a simple geometric argument shows that $\tau_M(0)=0$, see below. That's why $\tau_M$ is only interesting for odd intersection forms. 

Freedman and Kirby \cite{FK,K} defined $\tau_M(c)$ by representing $c$ by an embedded surface $\Sigma \subset M$ such that $M\smallsetminus \Sigma$ has a spin structure that does not extend across $\Sigma$. Let $\pi:S\nu(\Sigma,M) \to \Sigma$
 be the projection map of the boundary of a normal disk bundle for $\Sigma$. Then there is a quadratic refinement $q$ of the intersection form on $H_1(\Sigma)$ defined by taking the inverse image torus $\pi^{-1}(a)$ for a circle $a$ in $\Sigma$ and observing that it comes equipped with a canonical spin structure. Then $q(a) \in \Z_2$ is the Arf invariant of this spin torus, aka its spin bordism class, and finally $\tau_M(c)$ is the Arf invariant of $q$. 
 
This definition can be drastically simplified if $c$ is represented by an immersed sphere $S$ whose self-intersection points are paired by framed Whitney disks $W_1,\dots,W_g$. (This is always the case if $M$ is simply-connected as in the introduction.)
\begin{lem}\label{lem:c}
Define $\tau_1(S,W_i)\in\Z_2$ to be the sum of all intersections between the immersed sphere $S$ and the interiors of the Whitney disks $W_i$. Then $\tau_1(S,W_i) = \tau_M(c)$.
\end{lem}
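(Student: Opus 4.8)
The plan is to manufacture an explicit embedded surface $\Sigma$ representing $c$ from the Whitney tower $S\cup W_i$, and then to evaluate the Freedman--Kirby invariant $\tau_M(c)=\Arf(q)$ on a geometric symplectic basis of $H_1(\Sigma)$ adapted to that surface.

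First I would convert the immersed sphere $S$ into an embedded surface $\Sigma$ by tubing along the Whitney disks. Each $W_i$ pairs a canceling pair of self-intersection points of $S$, and tubing the two sheets together along $W_i$ cancels both double points at the cost of a single genus-one handle. Performing this for all $g$ Whitney disks (which by Remark~\ref{rem:framing} we may take to be disjointly embedded) produces an embedded genus-$g$ surface $\Sigma$ with $[\Sigma]=[S]=c$. Since $c$ is characteristic, $[\Sigma]$ is Poincar\'e dual to $w_2(M)$, so $M\smallsetminus\Sigma$ carries a spin structure that does not extend across $\Sigma$; hence $\Sigma$ is a valid input to the Freedman--Kirby definition and computes the same $\tau_M(c)$.

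Next I would fix the symplectic basis $\{a_i,b_i\}_{i=1}^g$ of $H_1(\Sigma)$ coming from the handles, where $a_i$ is the meridian running around the $i$-th tube and $b_i$ is the longitude running over it (following $\partial W_i$ and back through the handle). The core of the argument is to compute the quadratic refinement $q$ on these classes by analyzing the spin tori $\pi^{-1}(a_i)$ and $\pi^{-1}(b_i)$ in $S\nu(\Sigma,M)$. I expect $q(a_i)=1$ for every meridian, because the normal torus over $a_i$ inherits exactly the non-bounding spin structure recorded by the nonextension across $\Sigma$. For the longitude, $b_i$ bounds (a capped-off copy of) $W_i$, so $q(b_i)$ measures the framing defect of $W_i$: the boundary-framing term is the twist of $W_i$, which vanishes because the $W_i$ are framed, while each interior intersection of $S$ with $W_i$ toggles the spin framing along $b_i$. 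Thus $q(b_i)\equiv \#\{S\pitchfork W_i\}\pmod 2$.

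Finally, applying the Arf formula on a symplectic basis yields
\[
\tau_M(c)=\Arf(q)=\sum_{i=1}^g q(a_i)\,q(b_i)=\sum_{i=1}^g \#\{S\pitchfork W_i\}=\tau_1(S,W_i)\pmod 2,
\]
as claimed. The main obstacle is the bookkeeping in the third step: pinning down the induced spin structure on each normal torus, verifying in particular that $q(a_i)=1$ and that $q(b_i)$ is exactly the mod-$2$ count $\#\{S\pitchfork W_i\}$ once the framing (twist) contributions are accounted for, and checking that the chosen $\{a_i,b_i\}$ genuinely form a symplectic basis so the Arf formula applies. Independence of the auxiliary choices (the disks, the tubing) is automatic, since $\tau_M(c)$ is defined intrinsically from $M$ and $c$.
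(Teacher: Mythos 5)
Your proposal is correct and follows essentially the same route as the paper: tube the immersed sphere $S$ into an embedded genus-$g$ surface $\Sigma$ using the Whitney data, observe that the resulting meridian/longitude basis satisfies $q(a_i)=1$ and $q(b_i)\equiv \#\{S\pitchfork W_i\}$, and apply the Arf formula. The only difference is that the paper disposes of the spin-structure bookkeeping you flag as the ``main obstacle'' by quoting the Freedman--Kirby capped-surface formula ($q$ of a curve equals the mod-$2$ count of intersections of the interior of its cap with $\Sigma$), with the small normal disks to the tubes and the Whitney disks $W_i$ serving as the dual pairs of caps.
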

\begin{proof}
In \cite{FK} the following simplification of $\tau_M(c)$ is already explained, in fact, this was the original definition (and only later it was realized that one needs a more general approach because caps don't always exist):  Assume that $c$ is represented by a surface $\Sigma$ with (immersed, framed) {\em caps}. These are (immersed, framed) disks $A_1,\dots,A_g,B_1,\dots,B_g$ in $M$ bounding a hyperbolic basis $a_1,\dots,a_g,b_1,\dots,b_g$ of curves in $H_1(\Sigma)$. 

Freedman-Kirby show that the quadratic form $q:H_1(\Sigma)\to \Z_2$ is determined by $q(a_i)=$ number of intersections between the interior of the cap $A_i$ and $\Sigma$, similarly for $q(b_i)$.  By definition of the Arf invariant, one gets that
\[
\tau_M(c) = \sum^g_{i=1} q(a_i)\cdot q(b_i)
\]
Assume now that $c$ is represented by an immersed sphere $S$ whose self-intersection points are paired by (immersed, framed) Whitney disks $W_1,\dots,W_k$. We can get into the capped surface situation as follows: For each pair of self-intersection points of $S$, add a tube $T_i$ on one sheet going from one self-intersection to the other. That turns $S$ into an embedded surface $\Sigma$ with half of the caps $A_i$ given by small normal disks to $\Sigma$ that bound the generating circles on $T_i$. Moreover, the Whitney disks $W_i$ can serve as the dual caps $B_i$, preserving the framing, as illustrated in Figure~\ref{fig:framings-preserved}.

\begin{figure}[h]
        \centerline{\includegraphics[scale=.35]{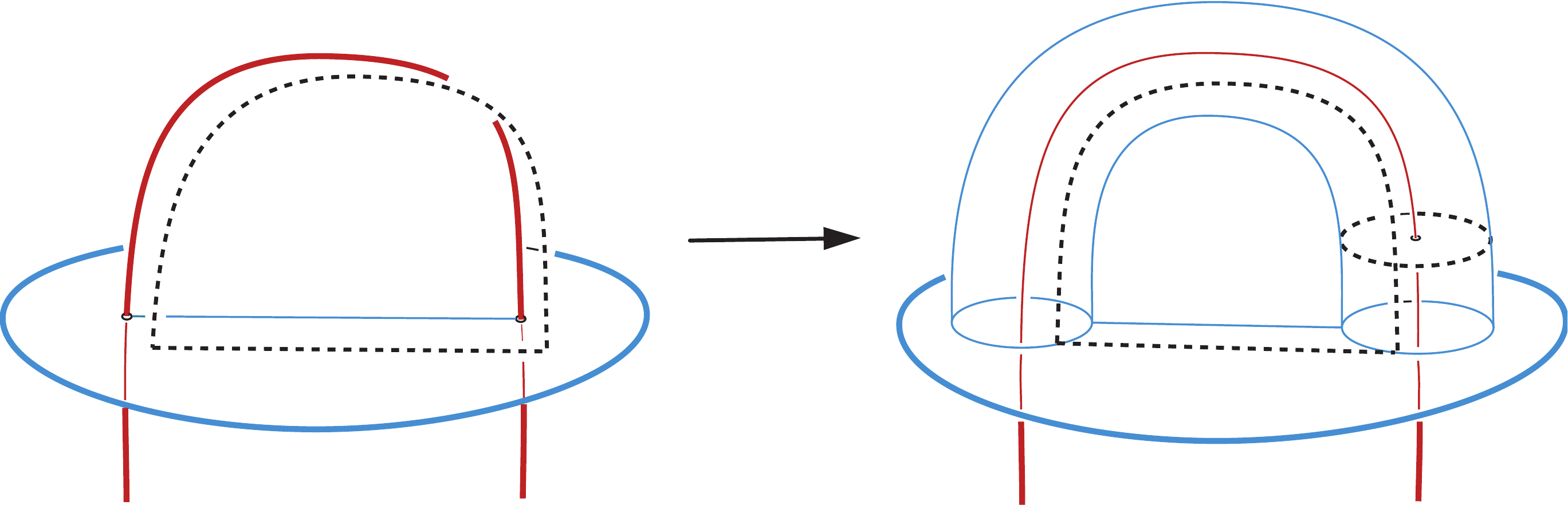}}
        \caption{Turning an immersed sphere with Whitney disks into a capped surface.}
        \label{fig:framings-preserved}
\end{figure}

By construction, $q(a_i)=1$ since each normal disk $A_i$ intersects $\Sigma$ in a single point. Therefore, the required formula follows:
\[
\tau_M(c) = \sum^g_{i=1} q(b_i) = \tau_1(S,W_i)
\]
\end{proof}
\begin{rem}\label{rem:tau well-defined}
In the simply-connected case, it is not hard to see that $\tau_1\in \Z_2$ is well-defined exactly on characteristic elements. One thing to check is that it does not depend on the choices of the Whitney disks $W_i$. Once we fix the boundary, any two such choices differ by a connected sum into a sphere $S_i$. If we require the Whitney disks to be (stably) framed then $S_i$ needs to be (stably) framed and hence it intersects a characteristic sphere in an even number of points, leaving our count $\tau_1$ unchanged modulo two.
\end{rem}
All these considerations can be found in chapter~10 of the book \cite{FQ} by Freedman and Quinn. Unfortunately, the results don't hold as stated for 4--manifolds with fundamental groups that contain 2-torsion elements. The problem arises from different choices of pairings of intersection points, as observed by Richard Stong in \cite{S}. In \cite{ST}, the last two authors gave a complete discussion of the invariant $\tau_1$ in the presence of fundamental groups.

\section{Abelian groups generated by trees}\label{sec:trees}

All trees considered in this paper are \emph{unitrivalent, oriented and labelled}.  This means that they are equipped with \emph{vertex orientations}, i.e.\ cyclic orderings of the edges incident to each trivalent vertex. Moreover, the univalent vertices of a tree are labeled by elements of the index set $\{1,2,\ldots,m\}$, except for the unlabeled {\em root vertex} if a given tree happens to be rooted. (There is at most one root, even though any other index can appear several times). All trees are considered up to label-preserving isomorphism.

The  \emph{order} of such a tree is the number of trivalent vertices. 
\begin{figure}[h]
        \centerline{\includegraphics[scale=.5]{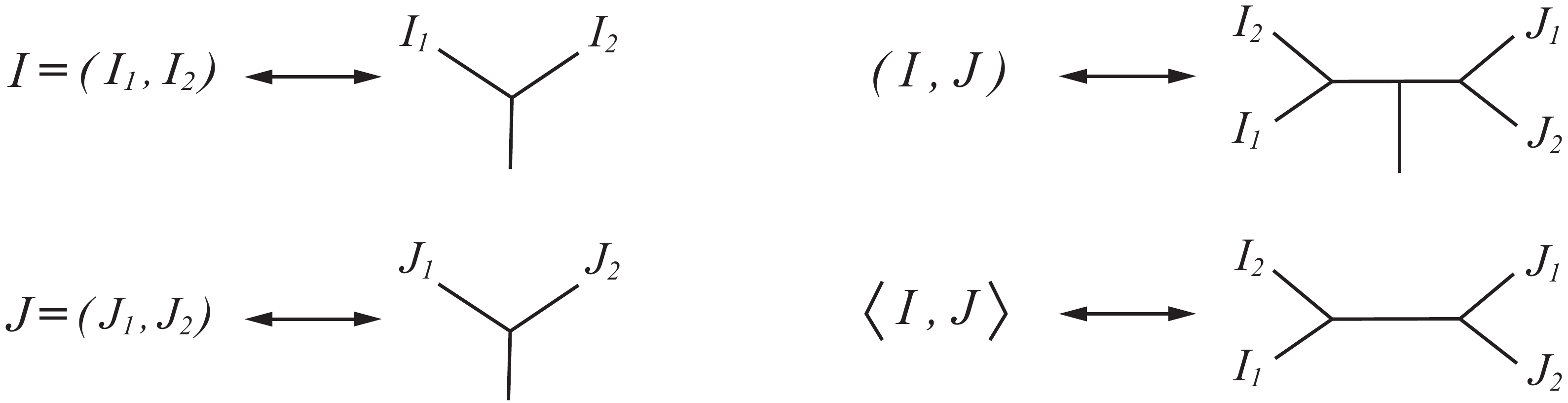}}
        \caption{Rooted and inner products.}
        \label{fig:products}
\end{figure}

Given rooted trees $I$ and $J$, the \emph{rooted product} $(I,J)$ is the rooted tree gotten by identifying the two roots to a vertex and adjoining a rooted edge to this new vertex, with the orientation of the new trivalent vertex given by the ordering of $I$ and $J$ in $(I,J)$. 
The {\em inner product} $\langle I,J \rangle$ of two rooted trees $I$ and $J$ is defined to be the unrooted tree gotten by identifying the two rooted edges to a single edge. 
We observe that the two products interact well in the sense of Figure~\ref{fig:rotate}.
\begin{figure}[h]
        \centerline{\includegraphics[scale=.55]{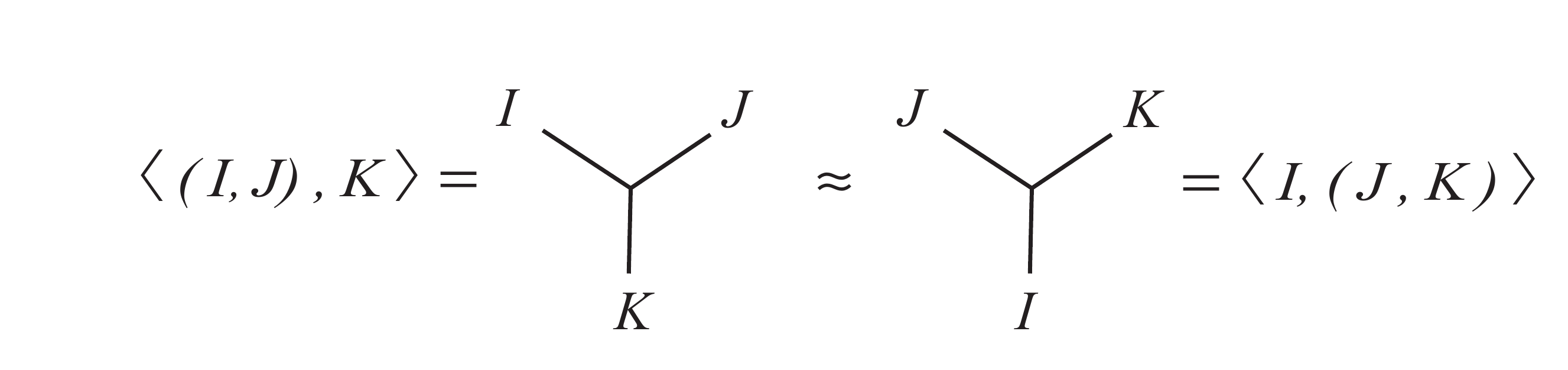}}
        \caption{Invariance of the inner product.}
        \label{fig:rotate}
\end{figure}

Let $\bL(m)$ be the free abelian group generated by (isomorphism classes of) rooted trees as above. It is graded by order and the rooted product can be extended linearly to a pairing
\[
( \ , \ ):\  \bL(m) \otimes \bL(m)\longrightarrow \bL(m)
\]
This is grading preserving on $\bL(m)[1]$, i.e.~it preserves the grading when shifted up by one (so order is replaced by the number of univalent non-root vertices). On the other hand, the inner product 
\[
\langle \ , \ \rangle:\  \bL(m) \otimes \bL(m)\longrightarrow \bT(m)
\]
is grading preserving via order. Here $\bT(m)$ is the free abelian group generated by unrooted trees as above. 

Note that rotating the relevant planar trees by 180 respectively 120 degrees shows that the inner product is both {\em symmetric} and {\em invariant}: $\langle I,J \rangle = \langle J,I \rangle$ and $\langle (I,J),K \rangle = \langle I,(J,K) \rangle$,
see Figure~\ref{fig:rotate} for the proof of invariance.

\begin{defn}\label{def:tree-groups} The graded abelian groups $\cL(m)$ respectively $\cT(m)$ are defined as quotients of $\bL(m)$ respectively $\bT(m)$ by the AS and IHX relations as in Figure~\ref{fig:relations}.
\begin{figure}[h]
\centerline{\includegraphics[scale=.9]{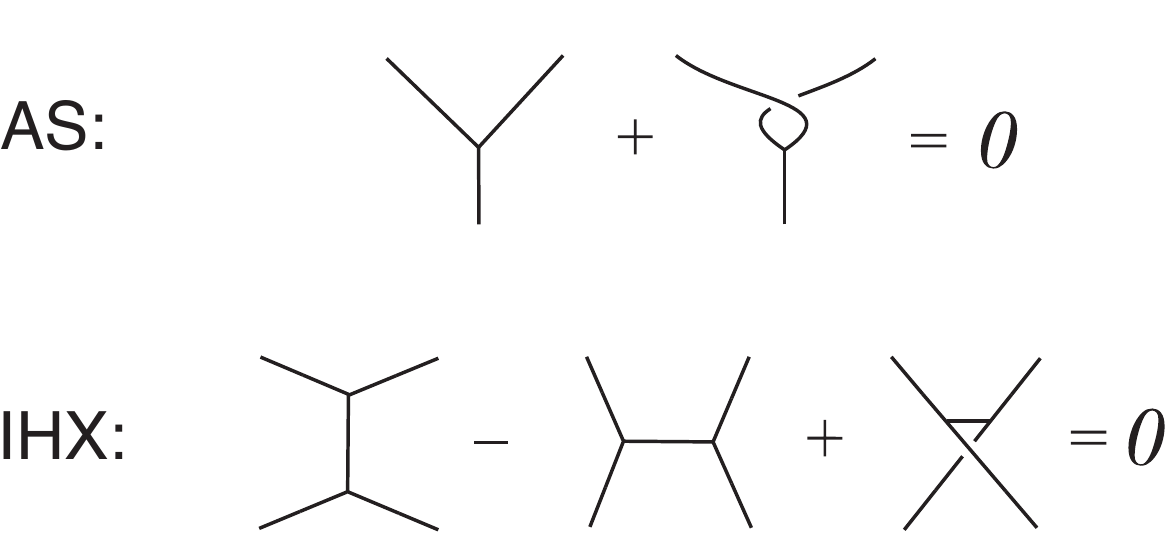}}
         \caption{Local \emph{antisymmetry} (AS) and \emph{Jacobi} (IHX) relations
         in $\cL(m)$ and $\cT(m)$. All trivalent orientations come from an orientation of the plane, and univalent vertices extend to subtrees which are fixed in each equation.}
         \label{fig:relations}
\end{figure}
\end{defn}

It is well known that $\cL(m)$ is the free (quasi) Lie algebra over $\Z$ on $m$ generators with Lie bracket induced by the rooted product. Here the word {\em quasi} refers to the fact that we only require the antisymmetry relations $[X,Y]= - [Y,X]$. So $[X,X]$ is not necessarily zero in these Lie algebras. In our previous papers, we needed to consider both versions of Lie algebras and used the notation $\sL'_{n+1}(m)$ for $\cL_n(m)$ (recall that one gets a {\em graded} Lie algebra only when shifting the order by one). In this paper we will only study one type of Lie algebras and usually omit the adjective `quasi'. 

\begin{rem}
The inner product extends uniquely to a bilinear, symmetric, invariant pairing
\[
\langle \ , \ \rangle:\ \cL(m) \times \cL(m) \longrightarrow \cT(m)
\]
This follows simply from observing that the AS and IHX relations hold on both sides and are preserved by the inner product. We will show in Lemma~\ref{lem:inner product} that this inner product is in fact {\em universal}. 
\end{rem}

\begin{defn}\label{def:twisted-tree-groups} 
The group $\cT^{\iinfty}_{2n}(m)$ is gotten from $\cT_{2n}(m)$ by adding order $n$ $\iinfty$-trees as new generators. These are rooted trees of order $n$ as above, except that the root carries the label $\iinfty$. In addition to the IHX- and AS-relations on unrooted trees in $\cT_{2n}(m)$ , we introduce the 
following new  {\em symmetry}, \emph{interior twist} and \emph{twisted IHX} relations:
\[
J^\iinfty=(-J)^\iinfty\quad\quad
 2\cdot J^\iinfty=\langle J,J\rangle \quad\quad I^\iinfty=H^\iinfty+X^\iinfty- \langle H,X\rangle 
\]
\end{defn}
As their names suggest, these new relations arose from geometric considerations for twisted Whitney towers in \cite{CST1}. They will be explained algebraically in our last section via the theory of universal quadratic refinements. 

Roughly speaking, the universal symmetric pairing $ \langle \ , \ \rangle$ will be shown to admit a universal quadratic refinement $q: \cL_n(m) \to \cT_{2n}^\iinfty(m)$ defined by $q(J):=J^\iinfty$. In particular,  with the right algebraic notion of `quadratic refinement', the group $\cT_{2n}^\iinfty(m)$ is completely determined by the pairing $ \langle \ , \ \rangle$. The rest of this paper is devoted to finding this notion.

As a consequence, we will prove the following exact sequence at the very end of this paper. It was used substantially in  \cite{CST1}  for the classification of Whitney towers in the 4-ball.
\begin{thm}\label{thm:exact}
For all $m,n$, the maps $t\mapsto t$ respectively $J^\iinfty\mapsto 1\otimes J$ give  an exact sequence:
\[
0 \ra \cT_{2n}(m) \ra \cT^\iinfty_{2n}(m) \ra \Z_2 \otimes \cL_{n}(m) \ra 0
\]
\end{thm}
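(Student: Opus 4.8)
The plan is to establish the three ingredients of the short exact sequence separately: that the map $p\colon\cT^\iinfty_{2n}(m)\to\Z_2\otimes\cL_n(m)$ sending $J^\iinfty\mapsto 1\otimes J$ and unrooted trees to $0$ is a well-defined surjection; that $i\colon\cT_{2n}(m)\to\cT^\iinfty_{2n}(m)$, $t\mapsto t$, is injective; and that $\im i=\Ker p$. Well-definedness of $p$ is a relation-by-relation check: the AS and IHX relations among unrooted trees map to $0$; the symmetry relation gives $1\otimes J=1\otimes(-J)$, which holds because $-1=1$ in $\Z_2$; the interior twist relation maps to $2(1\otimes J)=0$; and the twisted IHX relation maps to $1\otimes I=1\otimes H+1\otimes X$, which is the Jacobi relation $I=H-X$ of $\cL_n(m)$ read modulo $2$. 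Surjectivity is clear since $\cL_n(m)$ is generated by rooted trees, each $1\otimes J$ being $p(J^\iinfty)$, and $p\circ i=0$ because $p$ kills unrooted trees, so $\im i\subseteq\Ker p$.

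\emph{Middle exactness.} Since $p$ annihilates $\im i$, it induces $\bar p\colon\cT^\iinfty_{2n}(m)/\im i\to\Z_2\otimes\cL_n(m)$, and $\Ker p=\im i$ is equivalent to $\bar p$ being an isomorphism. I would compute the quotient $\cT^\iinfty_{2n}(m)/\im i$ directly from the presentation by setting all unrooted trees equal to $0$: the generators that survive are the $\iinfty$-trees $J^\iinfty$, and the relations become $J^\iinfty=(-J)^\iinfty$, then $2J^\iinfty=0$ (the inner product in the interior twist relation is now $0$), and $I^\iinfty=H^\iinfty+X^\iinfty$ (the inner-product correction in twisted IHX is now $0$). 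These are exactly the images of the AS, $2\cdot$, and IHX relations defining $\Z_2\otimes\cL_n(m)=\bL_n(m)/(\mathrm{AS},\mathrm{IHX},2)$ under $J\mapsto J^\iinfty$. Hence $1\otimes J\mapsto J^\iinfty$ defines a map inverse to $\bar p$, so $\bar p$ is an isomorphism and $\Ker p=\im i$.

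\emph{Injectivity of $i$ (the crux).} One cannot simply retract $\cT^\iinfty_{2n}(m)$ onto $\cT_{2n}(m)$, because the interior twist relation $2J^\iinfty=\langle J,J\rangle$ would force $\langle J,J\rangle$ to be $2$-divisible in $\cT_{2n}(m)$, which it need not be; this is precisely why the $\iinfty$-trees are present. Instead I would identify $\cT^\iinfty_{2n}(m)$ with the universal symmetric quadratic refinement $\Gamma$ of the inner product $\langle\,,\,\rangle\colon\cL_n(m)\times\cL_n(m)\to\cT_{2n}(m)$, whose construction is the content of the general theory developed earlier in the paper. Concretely, choosing rooted trees $J_\alpha$ whose classes form a basis of the free abelian group $\cL_n(m)$, one sets $\Gamma=\big(\cT_{2n}(m)\oplus\bigoplus_\alpha\Z\,u_\alpha\big)/\langle\,2u_\alpha-\langle J_\alpha,J_\alpha\rangle\,\rangle$ with the refinement $q\colon\cL_n(m)\to\Gamma$ determined by $q(J_\alpha)=u_\alpha$. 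The decisive point is that the coefficient subgroup injects: if $\sum_\alpha n_\alpha(2u_\alpha-\langle J_\alpha,J_\alpha\rangle)$ lands in $\cT_{2n}(m)$ then every $n_\alpha=0$, so $\cT_{2n}(m)\hookrightarrow\Gamma$. I would then define $\Phi\colon\cT^\iinfty_{2n}(m)\to\Gamma$ by $t\mapsto t$ and $J^\iinfty\mapsto q(J)$, so that $\Phi\circ i$ is this inclusion; injectivity of $i$ follows at once. (The mutually inverse assignment $u_\alpha\mapsto J_\alpha^\iinfty$, $t\mapsto t$ in fact upgrades $\Phi$ to an isomorphism $\cT^\iinfty_{2n}(m)\cong\Gamma$ transporting the exact sequence of $\Gamma$ to the desired one.)

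The main obstacle is verifying that $\Phi$ is well defined, i.e.\ that $q$ is a genuine symmetric quadratic refinement on all of $\cL_n(m)$: the three $\iinfty$-relations must correspond exactly to the defining identities $q(-x)=q(x)$, $2q(x)=\langle x,x\rangle$, and the polarization identity $q(x-y)=q(x)+q(y)-\langle x,y\rangle$ — the last being the twisted IHX relation applied to the Jacobi relation $I=H-X$ — and one must check that $q$ is independent of the chosen basis and compatible with AS and IHX, with $J^\iinfty$ agreeing with $q$ on the class of $J$ for every tree $J$. This is precisely the universality of $\langle\,,\,\rangle$ together with its refinement $q(J)=J^\iinfty$, i.e.\ the general theory of quadratic forms the paper constructs; granting it, the exact sequence is the formal consequence sketched above.
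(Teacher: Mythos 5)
Your outline follows the paper's own route: surjectivity and middle exactness are checked from the presentation (this is what the paper packages as Corollary~\ref{cor:exact}), and the whole weight is placed on injectivity of $\cT_{2n}(m)\to\cT^\iinfty_{2n}(m)$, which you propose to get by identifying $\cT^\iinfty_{2n}(m)$ with the universal symmetric quadratic refinement of $\langle\,\cdot\,,\,\cdot\,\rangle$ (the paper's Corollary~\ref{cor:quadratic} plus Remark~\ref{rem:injective}). The relation-by-relation checks and the computation of $\cT^\iinfty_{2n}(m)/\im i$ are correct.

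The gap is in the step you yourself call the crux. Your concrete model $\Gamma=\bigl(\cT_{2n}(m)\oplus\bigoplus_\alpha\Z\,u_\alpha\bigr)/\langle 2u_\alpha-\langle J_\alpha,J_\alpha\rangle\rangle$ presupposes that $\cL_n(m)$ is free abelian with a basis of rooted trees. It is not: $\cL_n(m)$ is the free \emph{quasi}-Lie algebra, in which the AS relation gives $2[X,X]=0$ with $[X,X]\neq 0$ in general; for instance $\cL_1(1)\cong\Z_2$, and $\cL_n(m)$ has $2$-torsion for every odd $n$. So there is no basis $\{J_\alpha\}$, the quadratic extension of $q$ from the $u_\alpha$ is undefined, and your ``decisive point'' (that $\sum_\alpha n_\alpha(2u_\alpha-\langle J_\alpha,J_\alpha\rangle)\in\cT_{2n}(m)$ forces all $n_\alpha=0$) is vacuous. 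Taking generators instead of a basis does not repair it, because then $\Gamma$ must also carry the relations induced by the AS and IHX relations among the $J_\alpha$ (the twisted AS and IHX relations of Lemma~\ref{lem:commutative presentation}), which your $\Gamma$ omits. The torsion is exactly where injectivity needs an argument: in the paper's basis-free model $M_{ee}\times_\lambda A$ the relators $(-2\lambda(a,a),-2a)$, $a\in A$, already form a subgroup, so an element of it lying over $0\in A$ equals $(-2\lambda(a,a),-2a)$ for a single $a$ with $2a=0$; bilinearity then gives $-2\lambda(a,a)=-\lambda(2a,a)=0$, and this handles precisely the case $a\neq 0$, $2a=0$ that your basis picture cannot see. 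Replacing your construction of $\Gamma$ by the paper's $M^c_e$ and its injectivity argument (Remark~\ref{rem:injective}) closes the gap; the rest of your argument then goes through.
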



\section{Invariant forms and quadratic refinements}\label{sec:invt-forms-quadratic-refinements}
In this section we explain an algebraic framework into which our groups 
$\cT(m)$ and $\cT^\iinfty(m)$ fit naturally. 
In Lemma~\ref{lem:inner product} we show that the 
$\cT(m)$-valued inner product $\langle\,\cdot\,,\,\cdot\,\rangle$ on the free Lie algebra is universal. Then a general theory of quadratic refinements is developed and specialized from the non-commutative to the commutative to finally, symmetric settings. In Corollary~\ref{cor:quadratic} we show that $\cT_{2n}^\iinfty(m)$ is the home for the universal quadratic refinement of the $\cT_{2n}(m)$-valued inner product $\langle \ , \ \rangle$.

We work over the ground ring of integers but all our arguments go through for any commutative ring. We also only discuss the case of finite generating sets $\{1,\dots,m\}$, even though everything holds in the infinite case.

\subsection{A universal invariant form}

The following lemma shows that this inner product is {\em universal} for Lie algebras with $m$ generators. 
 
\begin{lem} \label{lem:inner product} Let $\g$ be a Lie algebra together with a  bilinear, symmetric, invariant pairing $\lambda:\g \times \g \to M$ into some abelian group $M$. If $\alpha:\cL(m)\to\g$ is a Lie homomorphism (given by $m$ arbitrary elements in $\g$) there exists a {\em unique} linear map $\Psi:\cT(m)\to M$ such that  for all $X,Y\in \cL(m)$
\[
\lambda(\alpha(X), \alpha(Y) ) = \Psi( \langle X,Y \rangle )
\]
\end{lem}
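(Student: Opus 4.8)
The plan is to construct $\Psi$ on generators of $\cT(m)$, verify it respects the defining relations, and then check the identity $\lambda(\alpha(X),\alpha(Y)) = \Psi(\langle X,Y\rangle)$ and uniqueness. First I would note that $\cT(m)$ is generated (over $\Z$) by unrooted trees, each of which can be written as an inner product $\langle I,J\rangle$ of two rooted trees $I,J$: indeed, pick any edge of the unrooted tree, cut it, and regard the two resulting half-edges as the roots of two rooted trees $I$ and $J$. So the natural attempt is to \emph{define} $\Psi(\langle I,J\rangle) := \lambda(\alpha(I),\alpha(J))$, where I abuse notation by writing $\alpha(I)$ for the image under $\alpha$ of the bracketing in $\cL(m)$ determined by the rooted tree $I$. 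The required identity $\lambda(\alpha(X),\alpha(Y)) = \Psi(\langle X,Y\rangle)$ then holds by construction (extended bilinearly), and uniqueness is immediate since every generator of $\cT(m)$ is of the form $\langle I,J\rangle$, forcing $\Psi$'s value there.

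The real content is \textbf{well-definedness}: the formula $\Psi(\langle I,J\rangle)=\lambda(\alpha(I),\alpha(J))$ must be independent of how a given unrooted tree is cut into $(I,J)$, and it must descend through the AS and IHX relations defining $\cT(m)$. The choice-of-edge ambiguity is exactly where the hypotheses on $\lambda$ enter. Cutting at a different edge corresponds, after a sequence of elementary moves, to rebracketing, and the invariance $\lambda((I,J),K) = \lambda(I,(J,K))$ together with symmetry $\lambda(I,J)=\lambda(J,I)$ is precisely what guarantees the value is unchanged — this mirrors the geometric ``rotation'' invariance recorded in Figure~\ref{fig:rotate}. Concretely I would argue that any two cuts of the same unrooted tree are related by finitely many moves each of which either swaps the two factors (handled by symmetry) or slides the cut across one trivalent vertex (handled by invariance), so the well-known ``any edge gives the same inner product'' phenomenon transfers from $\cT(m)$ to $M$ via $\lambda$.

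For the relations: since $\alpha$ is a Lie homomorphism, $\alpha(I)$ automatically satisfies the AS relation ($\alpha$ sends the sign change from reordering the two branches at a vertex to $-\alpha(I)$) and the IHX/Jacobi relation (the bracket in $\g$ satisfies Jacobi), so applying $\lambda$ to these identities in $\g$ shows $\Psi$ annihilates the AS and IHX relators on $\cT(m)$. Hence $\Psi$ descends to the quotient $\cT(m)$. The main obstacle I anticipate is making the well-definedness argument clean rather than a messy tree manipulation: the honest way is to observe that the bilinear, symmetric, invariant pairing $\langle\,\cdot\,,\,\cdot\,\rangle$ on $\cL(m)$ is itself defined by the very same ``cut an edge'' recipe, so the universal property reduces to the statement that any such $\lambda$ factors through this recipe — which is a formal consequence of $\lambda$ having the same three structural properties (bilinearity, symmetry, invariance) that the inner product was built to encode. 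I would phrase the proof so that well-definedness in $\cT(m)$ and the factorization of $\lambda$ are two instances of one lemma about invariant forms on quasi-Lie algebras, thereby avoiding any explicit combinatorial case analysis.
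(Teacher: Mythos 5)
Your proposal is correct and follows essentially the same route as the paper's proof: define the map on an unrooted tree by cutting an edge and applying $\lambda(\alpha(\cdot),\alpha(\cdot))$, use symmetry and invariance of $\lambda$ to show independence of the chosen edge (by passing between adjacent edges), observe that AS and IHX relators die because $\alpha$ is a Lie homomorphism, and get uniqueness from surjectivity of the inner product.
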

\begin{proof}
The uniqueness of $\Psi$ is clear since the inner product map is onto. For existence, we first construct a map $\psi:\bT(m)\to M$ as follows. Given a tree $t\in \bT(m)$ pick an edge in $t$ to split $t= \langle X,Y \rangle $ for rooted trees $X,Y\in \bL(m)$. Then set
\[
\psi(t) := \lambda(\alpha(X), \alpha(Y))
\]
If we split $t$ at an adjacent edge, this expression stays unchanged because of the symmetry and invariance of $\lambda$. However, one can go from any given edge to any other by a sequence of adjacent edges, showing that $\psi(t)$ does not depend on the choice of splitting.

It is clear that $\psi$ can be extended linearly to the free abelian group on $\bT(m)$ and since $\alpha$ preserves AS and IHX relations by assumption, this extension factors through a map $\Psi$ as required.
\end{proof}

\begin{rem} \label{rem:grading} Recall that $\cL(m)[1]$ is actually a graded Lie algebra, i.e.\ the Lie bracket preserves the grading when shifted up by one (so order is replaced by the number of univalent non-root vertices).
Let's assume in the above lemma that the groups $\g,M$ are $\Z$-graded, $\g[1]$ is a graded Lie algebra and that $\lambda,\alpha$ preserve those gradings. Then the proof shows that the resulting linear map $\Psi$ also preserves the grading.
\end{rem}

\subsection{Non-commutative quadratic groups}
The rest of this section describes a general setting for relating our groups $\cT^\iinfty_{2n}(m)$ that measure the intersection invariant of twisted Whitney towers to a universal (symmetric) quadratic refinement of the $\cT_{2n}(m)$-valued inner product.
We first give a couple of definitions that generalize those introduced by Hans Baues in \cite{B1} and \cite[\S 8]{B2}, and Ranicki in \cite[p.246]{R}. These will lead to the most general notion of quadratic refinements for which we construct a universal example. Later we shall specialize the definitions from {\em non-commutative} to {\em commutative} and finally, to {\em symmetric} quadratic forms and construct universal examples in all cases.
\begin{defn}\label{def:quadratic group} A {\em (non-commutative) quadratic group}
\[
\frak M = (M_{e} \overset{h}{\to} M_{ee} \overset{p}{\to} M_{e})
\]
consists of two groups $M_e, M_{ee}$ and two homomorphisms $h,p$ satisfying 
\begin{enumerate}
\item $M_{ee}$ is abelian,
\item the image of $p$ lies in the center of $M_e$,
\item $hph=2h$. 
\end{enumerate}
$\frak M$ will serve as the range of the (non-commutative) quadratic forms defined below. We will write both groups additively since in most examples $M_e$ turns out to be commutative. A morphism $\beta: \frak M \to \frak{M}'$ between quadratic groups is a pair of homomorphisms 
\[
\beta_e: M_e\to M_e'  \quad \text{ and } \quad \beta_{ee}: M_{ee}\to M_{ee}'
\]
 such that both diagrams involving $h,h',p,p'$ commute.
\end{defn}

\begin{exs} \label{ex:M}
The example motivating the notation comes from homotopy theory, see e.g.\cite{B1}. For $m < 3n-2$, let $M_e= \pi_m(S^n)$, $M_{ee}=\pi_m(S^{2n-1})$, $h$ be the Hopf invariant and $p$ be given by post-composing with the Whitehead product $[\iota_n,\iota_n]:S^{2n-1}\to S^n$.

This quadratic group satisfies $php=2p$ which is part of the definition used in \cite{B1}, where $M_e$ is also assumed to be commutative.  As we shall see, these additional assumptions have the disadvantage that they are not satisfied for the universal example~\ref{ex:universal}. 

Another important example comes from an abelian group with involution $(M,*)$. Then we let
\begin{equation} \tag{$M$}
M_{ee}:=M, \quad M_e:=M/\langle x - x^* \rangle , \quad h([x]):=x+x^*
\end{equation}
and $p$ be the natural quotient map.  For example, if $M$ is a ring with involution $r\mapsto \bar r$, then we get two possible involutions on the abelian group $M$: $r^*= \pm \bar r$. The choice of sign determines whether we study symmetric respectively skew-symmetric pairings.
\end{exs}

We note that in this example $hp-\id=*$ and in the homotopy theoretic example $hp-\id=(-1)^n$. In fact, we have the following
\begin{lem}\label{lem:involutions}
Given a quadratic group, the endomorphism $hp-\id$ gives an involution on $M_{ee}$ (which we will denote by $*$). Moreover, the formula $\dagger(x) := ph(x) -x$ defines an anti-involution on $M_e$. These satisfy
\begin{enumerate}
\item $*\circ h = h$, 
\item $php = p + p\circ *$,
\item $p \circ * = \dagger\circ p$. 
\end{enumerate}
\end{lem}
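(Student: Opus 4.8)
The plan is to derive every assertion by direct manipulation of the three axioms, the only substantive input being the relation $hph = 2h$; the sole genuine subtlety is the possible non-commutativity of $M_e$, which forces one to keep track of orderings and to invoke the centrality of $\im p$ at the right moments. I would therefore organize the proof so that the abelian-target statements are dispatched first and the non-commutative anti-involution last.

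I would begin with the statements living on the abelian group $M_{ee}$, which are purely formal. Since $M_{ee}$ is abelian, $* := hp - \id$ is automatically a homomorphism, and in the endomorphism ring of $M_{ee}$ one computes
\[
*^2 = hphp - 2hp + \id = (hph)p - 2hp + \id = 2hp - 2hp + \id = \id,
\]
using $hph = 2h$; this proves $*$ is an involution. The three compatibility relations then collapse to single substitutions of the same identity: $*\circ h = hph - h = h$; $\,p + p\circ* = p + (php - p) = php$; and for (iii) one unwinds the definition of $\dagger$ after $p$ to get $\dagger\circ p = php - p$, which equals $p\circ* = php - p$.

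The main work, and the only place where care is needed, is showing that $\dagger(x) := ph(x) - x$ is an anti-involution on the possibly non-commutative group $M_e$. For the anti-homomorphism property I would expand $\dagger(x+y) = ph(x) + ph(y) - y - x$ (using that $h$ is a homomorphism into the abelian $M_{ee}$ and that $-(x+y) = -y - x$), and then reorder using that $ph(x), ph(y)$ are central by axiom (ii) so as to match $\dagger(y) + \dagger(x)$. For $\dagger^2 = \id$ the key observation is that $h(\dagger x) = hph(x) - h(x) = h(x)$ by $hph = 2h$, so that $ph(\dagger x) = ph(x)$; feeding this into $\dagger(\dagger x) = ph(\dagger x) - \dagger x = ph(x) - (ph(x) - x)$ and cancelling the central element $ph(x)$ yields $x$. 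I expect this last cancellation — correctly inverting a sum in a non-abelian group and exploiting centrality to reorder — to be the step most prone to sign and ordering errors, and hence the one to write out most carefully.
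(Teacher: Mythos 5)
Your proof is correct and follows exactly the route the paper intends: the paper leaves this lemma to the reader with the hint that one uses centrality of $\im(p)$ and the anti-homomorphism $x\mapsto -x$, which is precisely how you handle the only delicate point ($\dagger$ on the possibly non-commutative $M_e$). All the computations, including $*^2 = hphp - 2hp + \id = \id$ and the cancellation in $\dagger^2 = \id$ via $h\circ\dagger = h$, check out.
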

The proof of Lemma~\ref{lem:involutions} is straightforward and will be left to the reader. To show that $\dagger$ is an anti-homomorphism one uses that $\im(p)$ is central and that $x\mapsto -x$ is an anti-homomorphism. 

\begin{defn}\label{def:quadratic group refinement}
A quadratic group $\frak M$ is a {\em quadratic refinement} of an abelian group with involution $(M,*)$ if
\[
M_{ee}=M  \quad \text{ and } \quad *=hp-\id.
\]
It follows from (i) in Lemma~\ref{lem:involutions} that in this case, the image of $h$ lies in the fixed point set of the involution: $h: M_e \to M^{\z} = H^0(\z;M)$. 
\end{defn}
The example $(M)$ above gives one natural choice of a quadratic refinement, however, there are other canonical (and non-commutative) ones as we shall see in Example~\ref{ex:universal}.

It follows from (ii) in Lemma~\ref{lem:involutions} that the additional condition $php=2p$ used in \cite{B1} is satisfied if and only if $p=p\circ *$, or equivalently, if $p$ factors through the cofixed point set of the involution:
\[
p: M_{ee} \sra (M_{ee})_{\z} = H_0(\z;M_{ee}) \to M_e
\]
It follows that the notion in \cite[p.246]{R} is equivalent to that in \cite{B1}, except that $M_{ee}$ is assumed to be the ground ring $R$ in the former. In that case, our involution is simply $r^* = \epsilon \bar r$, where $\epsilon=\pm 1$ and $r\mapsto \bar r$ is the given involution on the ring $R$. 

Then $\epsilon$-symmetric forms in the sense of Ranicki become hermitian forms in the sense defined below. In particular, Ranicki's $(+1)$-symmetric forms are different from the notion of {\em symmetric} form in this paper: We reserve it for the easiest case where both involutions, $*$ and $\dagger$, are trivial.

\subsection{Non-commutative quadratic forms}
\begin{defn}\label{def:quadratic} A {\em (non-commutative) quadratic form} on an abelian
group $A$ with values in a (non-commutative) quadratic group 
$\frak M= (M_{e} \overset{h}{\to} M_{ee} \overset{p}{\to} M_{e})$ is given by a bilinear map $\lambda:A\times A \to M_{ee}$ and a map $\mu:A\to M_{e}$ satisfying
\begin{enumerate}
\item $\mu(a+a')= \mu(a)+\mu(a')+p \circ \lambda(a,a')$ \ and
\item $h  \circ \mu(a) = \lambda(a,a) \ \forall a,a' \in A$.
\end{enumerate}
We say that $\mu$ is a {\em quadratic refinement} of $\lambda$: Property (i) says that $\mu$ is quadratic and property (ii) means that it ``refines'' $\lambda$. The notation  $M_e$ and $M_{ee}$ was designed (by Baues) to reflect the number of variables ({\bf e}ntries) of the maps $\mu$ and $\lambda$ respectively. He also writes $\lambda=\lambda_{ee}$ and $\mu=\lambda_e$, however, we decided not to follow that part of the notation.

We write $(\lambda,\mu):A\to \frak M$ for such quadratic forms and we always assume that the quadratic group $\frak M$ is part of the data for $(\lambda,\mu)$. 
This means that the morphisms in the category of quadratic forms are pairs of morphisms 
\[
\alpha:A\to A'  \quad \text{ and } \quad \beta=(\beta_e,\beta_{ee}): \frak M \to \frak{M}'
\]
such that both diagrams involving $\lambda, \lambda', \mu, \mu'$ commute.
\end{defn}
\begin{lem}\label{lem:symmetries}
Let $(\lambda,\mu):A\to \frak M$ be a quadratic form as above. Then
$\lambda$ is {\em hermitian} with respect to the involution $*=hp-\id$ on $M_{ee}$:
\[
\lambda(a',a) = \lambda(a,a')^*
\]
and 
 $\mu$ is {\em hermitian} with respect to the anti-involution $\dagger =ph -\id$ on $M_e$:
\[
\mu(-a) =  \mu(a)^\dagger
\]
\end{lem}
\begin{proof}
As a consequence of conditions (i) and (ii) we get
\begin{align*}
& \lambda(a,a)+ \lambda(a',a') + \lambda(a',a) + \lambda(a,a') = \lambda(a+a', a+a') =\\
& h\circ\mu(a+a') = h(\mu(a)+ \mu(a') +p\circ \lambda(a,a'))=\\
& \lambda(a,a)+ \lambda(a',a') + hp(\lambda(a,a'))
\end{align*}
or equivalently, $\lambda(a',a) = (hp-\id)\lambda(a,a') = \lambda(a,a')^*$. 
Similarly,
\begin{align*}
& 0=\mu(0) = \mu(a -a) = \mu(a) + \mu(-a) + p\circ \lambda(a,-a) =\\
& \mu(a) + \mu(-a) - p\circ h\circ \mu(a) =\mu(-a) + (\id - ph)\mu(a)
\end{align*}
or equivalently, $\mu(-a) = \dagger\circ \mu(a) =:\mu(a)^\dagger$.
\end{proof}
Starting with a hermitian form $\lambda$ with values in a group with involution $(M,*)$, the first step in finding a quadratic refinement for $\lambda$ is to find a quadratic refinement $\frak M$ of $(M,*)$ in the sense of Definition~\ref{def:quadratic group refinement}, motivating our terminology.

\subsection{Universal quadratic refinements}

\begin{ex}\label{ex:universal}
Given a hermitian form $\lambda:A \times A \to (M,*)$, one gets a quadratic refinement $\mu_\lambda$ of $\lambda$ as follows. Set $M_{ee}:=M$ and define the universal target $M_{e}:=M_{ee} \times_\lambda A$ to be the group consisting of pairs $(m,a)$ with $m\in M_{ee}$ and $a\in A$ and multiplication given by
\[
(m,a) + (m',a') := (m + m' - \lambda(a,a'), a + a')
\]
In other words, $M_{e}$ is the central extension
\[
\xymatrix{
1 \ar[r]  & M_{ee} \ar[r]& M_{ee} \times_\lambda A  \ar[r]&  A \ar[r] & 1
}
\]
determined by the cocycle $\lambda$, compare Section~\ref{sec:presentations}. 
It follows that $M_e$ is commutative if and only if $\lambda$ is {\em symmetric} in the naive sense that $\lambda(a',a) = \lambda(a,a')$. Set
\[
 \quad p_\lambda(m):=(m,0),\quad  h_\lambda(m,a) :=  m +m^* + \lambda(a,a)
 \]
We claim that $\frak M_\lambda := (M_{ee}\overset{p_\lambda}{\to} M_e\overset{h_\lambda}{\to}M_{ee})$ is a quadratic group as in Definition~\ref{def:quadratic group}. It is clear that $p_\lambda$ is a homomorphism with image in the center of $M_e$. The homomorphism property of $h_\lambda$ follows from the fact that $\lambda$ is bilinear and hermitian:
\begin{align*}
& h_\lambda((m,a)+(m',a'))=
 h_\lambda(m+m'-\lambda(a,a'), a+ a') =\\
& (m+m'-\lambda(a,a')) + (m+m'-\lambda(a,a'))^* + \lambda(a+a',a+a')=\\
&(m+m^* + \lambda(a,a)) + (m'+ m'^* +\lambda(a',a'))=h_\lambda(m,a)+ h_\lambda(m',a')
\end{align*}
Condition (iii) of a quadratic group is also checked easily:
\begin{align*}
& h_\lambda p_\lambda h_\lambda(m,a) = h_\lambda(m+m^* + \lambda(a,a),0) =\\
& (m+m^*+\lambda(a,a)) + (m+m^*+\lambda(a,a))^* = \\
& 2(m+m^*+\lambda(a,a))= 2h_\lambda(m,a)
\end{align*}
We also see that 
\[
(h_\lambda p_\lambda -\id)(m) = h_\lambda(m,0) - m = (m+m^*) - m = m^*
\]
which means that $\frak M_\lambda$ ``refines'' (in the sense of Definition~\ref{def:quadratic group refinement}) the group with involution $(M,*)$. 
Finally,  setting $\mu_\lambda(a):= (0,a)$,  we claim that $(\lambda,\mu_\lambda):A\to \frak M_\lambda$ is a quadratic refinement of $\lambda$. We need to check properties (i) and (ii) of a quadratic form (Definition~\ref{def:quadratic}):
(i) is simply $h_\lambda \circ \mu_\lambda(a) = h_\lambda(0,a) = \lambda(a,a)$,
and (ii) explains why we used a sign in front of $\lambda$ in our central extension:
\begin{align*}
& \mu_\lambda(a)+\mu_\lambda(a') + p_\lambda \circ\lambda(a,a') =
(0,a) + (0,a') + (\lambda(a,a'),0) =\\
& (-\lambda(a,a'),a+a') + (\lambda(a,a'), 0) = (0,a+a') = \mu_\lambda(a+a')
\end{align*}
\end{ex}

The following result will show that $\mu_\lambda$ is indeed a {\em universal} quadratic refinement of $\lambda$. This is the content of the first statement in the theorem below. It follows from the second statement because for any quadratic refinement $\mu$ of $\lambda$ it shows that forgetting the quadratic data gives canonical isomorphisms
\[
\QF(L\circ R(\lambda,\mu), (\lambda,\mu)) \cong \HF( R(\lambda,\mu) , R(\lambda,\mu)) = \HF(\lambda,\lambda)
\]
where $\QF$ respectively $\HF$ are (the morphisms in) the categories of quadratic respectively hermitian forms. Since 
\[
L\circ R(\lambda,\mu) = L(\lambda)= (\lambda,\mu_\lambda)
\]
 and the morphisms in the category $ \QR_\lambda$ of quadratic refinements of $\lambda$ 
 by definition all lie over the identity of $\lambda$, the set $ \QR_\lambda(\mu_\lambda,\mu)$
contains a unique element, namely the required universal morphism $\mu_\lambda \to \mu$.

\begin{thm}\label{thm:universal}
The quadratic form $(\lambda,\mu_\lambda)$ is initial in the category of quadratic refinements of $\lambda$. In fact, the forgetful functor $R(\lambda,\mu)=\lambda$ from the category of quadratic forms to the category of hermitian forms has a left adjoint $L:\HF\to \QF$ given by $L(\lambda):= (\lambda,\mu_\lambda)$.
\end{thm}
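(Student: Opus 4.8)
The plan is to prove the stronger adjunction statement directly; the assertion that $(\lambda,\mu_\lambda)$ is initial in $\QR_\lambda$ then follows exactly as in the paragraph preceding the theorem. First I would observe that $R$ is well-defined: by Lemma~\ref{lem:symmetries} the bilinear part of any quadratic form is hermitian with respect to $*=hp-\id$, so $R(\lambda,\mu)=\lambda$ does land in $\HF$. The adjunction amounts to producing, for each hermitian $\lambda:A\times A\to(M,*)$ and each quadratic form $(\lambda',\mu'):A'\to\frak M'$, a natural bijection
\[
\Phi:\ \QF\bigl(L(\lambda),(\lambda',\mu')\bigr)\ \xrightarrow{\ \cong\ }\ \HF(\lambda,\lambda'),
\]
and the obvious candidate is the forgetful map sending a quadratic morphism $(\alpha,(\beta_e,\beta_{ee}))$ to the hermitian morphism $(\alpha,\beta_{ee})$.

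For injectivity of $\Phi$ I would use that every element of $M_e=M_{ee}\times_\lambda A$ decomposes as $(m,a)=p_\lambda(m)+\mu_\lambda(a)$, since $\lambda(0,a)=0$ by bilinearity. The two commuting squares defining a quadratic morphism, namely $\beta_e\circ p_\lambda=p'\circ\beta_{ee}$ and $\beta_e\circ\mu_\lambda=\mu'\circ\alpha$, then force
\[
\beta_e(m,a)=p'(\beta_{ee}(m))+\mu'(\alpha(a)),
\]
so $\beta_e$ is completely determined by the pair $(\alpha,\beta_{ee})$.

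For surjectivity I would, given a hermitian morphism $(\alpha,\beta_{ee}):\lambda\to\lambda'$, \emph{define} $\beta_e$ by the displayed formula and verify that $(\alpha,(\beta_e,\beta_{ee}))$ is a morphism of quadratic forms. Compatibility with $p$ and with $\mu$ holds by construction (using $\mu'(0)=0$ and $p'(0)=0$), so only two substantive checks remain. That $\beta_e$ is a group homomorphism uses bilinearity of $\lambda'$, the quadratic identity~(i) for $\mu'$, the hermitian compatibility $\beta_{ee}\circ\lambda=\lambda'\circ(\alpha\times\alpha)$, and crucially the centrality of $\im(p')$, which lets the two terms $\pm\,p'\bigl(\lambda'(\alpha a,\alpha a')\bigr)$ cancel. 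That the $h$-square commutes, $h'\circ\beta_e=\beta_{ee}\circ h_\lambda$, uses $h'p'=\id+*'$ (from Lemma~\ref{lem:involutions}), the refining identity~(ii) for $\mu'$, and that $\beta_{ee}$ intertwines the involutions.

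I expect the homomorphism check to be the main obstacle, as it is the one place where all the ingredients conspire and where centrality of $\im(p')$ is genuinely needed; the $h$-square then falls out by the same bookkeeping, and naturality of $\Phi$ in both variables is routine. Finally, specializing to $(\lambda',\mu')=(\lambda,\mu)$ an arbitrary quadratic refinement of $\lambda$ and tracking the quadratic morphism corresponding to $\id_\lambda\in\HF(\lambda,\lambda)$ recovers the unique morphism $\mu_\lambda\to\mu$ in $\QR_\lambda$, establishing initiality of $(\lambda,\mu_\lambda)$.
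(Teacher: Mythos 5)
Your proposal is correct and follows essentially the same route as the paper: the forgetful map on morphism sets, uniqueness of $\beta_e$ from the decomposition $(m,a)=p_\lambda(m)+\mu_\lambda(a)$, and existence by defining $\beta_e(m,a)=p'(\beta_{ee}(m))+\mu'(\alpha(a))$ and verifying the homomorphism property (via the quadratic identity for $\mu'$ and centrality of $\im(p')$) and the $h$-square (via $h'p'=\id+*'$). The only difference is cosmetic phrasing (injectivity/surjectivity of $\Phi$ versus uniqueness/existence of $\beta_e$).
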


\begin{proof}
We have to construct natural isomorphisms
\[
\QF((\lambda,\mu_{\lambda}), (\lambda',\mu')) = \QF(L(\lambda), (\lambda',\mu')) \cong \HF( \lambda , R(\lambda',\mu')) = \HF(\lambda, \lambda')
\]
for any quadratic form $(\lambda',\mu')$ and hermitian form $\lambda$. Recall that the morphisms in $\QF$ are pairs $\alpha: A\to A'$ and $\beta=(\beta_e,\beta_{ee}): \frak M \to \frak M'$ such that the relevant diagrams commute. This implies that forgetting about the quadratic datum $\beta_e$ gives a natural map from the left to the right hand side above. 

Given a morphism $(\alpha,\beta_{ee}): \lambda\to \lambda'$ consisting of homomorphisms $\alpha:A\to A'$ and $\beta_{ee}: (M_{ee},*) \to (M_{ee}',*')$ such that
\[
\lambda'(\alpha(a_1), \alpha(a_2)) = \beta_{ee} \circ \lambda(a_1, a_2) \in M_{ee}' \quad \forall\ a_i\in A
\]
we need to show that there is a {\em unique} homomorphism $\beta_e: M_e \to M_e'$ such that the following 3 diagrams commute:
\[
\xymatrix{
\ar @{} [dr] |{(1)} M_{e} \ar[r]^{h} \ar[d]_{\beta_e} & M_{ee} \ar[d]^{\beta_{ee}} 
& \ar @{} [dr] |{(2)} M_{ee} \ar[r]^{p} \ar[d]_{\beta_{ee}} & M_{e} \ar[d]^{\beta_{e}} 
& \ar @{} [dr] |{(3)} A \ar[r]^{\mu_{\lambda}} \ar[d]_{\alpha} & M_{e} \ar[d]^{\beta_{e}} \\
 M_{e}' \ar[r]^{h'} & M_{ee}'
 &  M_{ee}' \ar[r]^{p'} & M_{e}'  
 &  A' \ar[r]^{\mu'} & M_{e}'  
}
\]
We will now make use of the fact that $M_e=M_{ee} \times_\lambda A$ because $\mu_\lambda$ is given as in Example~\ref{ex:universal}. In this case, diagrams (2) and (3) are equivalent to
\[
\beta_e(m,0) = p'\circ\beta_{ee}(m)  \quad \text{ and } \quad \beta_e(0,a) = \mu'\circ \alpha(a)
\]
because $p(m) = (m,0)$ and $\mu_\lambda(a) = (0,a)$. 
This implies directly the uniqueness of $\beta_e$. For existence, we only have to check that the formula
\[
\beta_e(m,a) := p'\circ\beta_{ee}(m) + \mu'\circ \alpha(a)
\]
gives indeed a group homomorphism $M_e \to M_e'$ that makes diagram (1) commute. Note that the image of $p'$ is central in $M_e'$ and hence the order of the summands does not matter. We have
\begin{align*}
& \beta_e((m,a)+(m',a'))=
 \beta_e(m+m'-\lambda(a,a'), a+ a') &=\\
& p'\circ\beta_{ee}(m+m'-\lambda(a,a')) + \mu'\circ \alpha(a+a')&=\\
&p'\circ\beta_{ee}(m) + p'\circ\beta_{ee}(m') - p'\circ \lambda'(\alpha(a),\alpha(a')) + \mu'\circ \alpha(a+a')&=\\
&p'\circ\beta_{ee}(m)  + p'\circ\beta_{ee}(m') + \mu'\circ \alpha(a) + \mu'\circ \alpha(a')&=\\
&\beta_e(m,a)+ \beta_e(m',a')
\end{align*}
To get to the fourth line,  we used property (ii) of a quadratic form to cancel the term $p'\circ \lambda'(\alpha(a),\alpha(a'))$. For the commutativity of diagram (1) we use property (i) of a quadratic form, as well as the fact that $\beta_{ee}$ preserves the involution $*$:
\begin{align*}
&h'\circ \beta_e(m,a) = h' (p'\circ \beta_{ee}(m) + \mu'\circ\alpha(a))=\\
&h'p'(\beta_{ee}(m)) + \lambda'(\alpha(a),\alpha(a)) = \beta_{ee}(m)^{*'} + \beta_{ee}(m) + \beta_{ee} \circ\lambda(a,a) =\\
 & \beta_{ee}(m^*+ m +\lambda(a,a)) = \beta_{ee}\circ h(m,a) 
\end{align*}
This finishes the proof of left adjointness of $L:\HF\to \QF$.
\end{proof}
If the bilinear form $\lambda$ happens to be  {\em symmetric}, or more precisely, if it takes values in a group $M_{ee}$ with {\em trivial} involution $*$, then the above construction still gives a quadratic refinement $\mu_\lambda$. Its target quadratic group $\frak M_\lambda$ has the properties that $M_e$ is abelian and $h_\lambda p_\lambda = 2\id$. 
It is not hard to see that our construction above leads to the following result.

\begin{thm}\label{thm:universal}
For any symmetric form $\lambda$ one can functorially construct a quadratic form $(\lambda,\mu_\lambda)$ that is initial in the category of quadratic refinements of $\lambda$ with trivial involution $*$. In fact, the forgetful functor $R(\lambda,\mu)=\lambda$ from the category of quadratic forms with trivial involution $*$ to the category of symmetric forms has a left adjoint $L(\lambda)= (\lambda,\mu_\lambda)$.
\end{thm}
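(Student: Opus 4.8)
The plan is to deduce this statement from the hermitian version proved above by simply restricting everything to the full subcategories cut out by the condition that the involution $*=hp-\id$ be trivial. Write $\SF$ for the category of symmetric forms and let $\QF^{\,0}\subset\QF$ be the full subcategory of quadratic forms $(\lambda,\mu):A\to\frak M$ whose quadratic group satisfies $hp=2\id$ on $M_{ee}$, equivalently $*=\id$. First I would record that the universal construction of Example~\ref{ex:universal} already lands in $\QF^{\,0}$ when $\lambda$ is symmetric: as noted just above the statement, $M_e=M_{ee}\times_\lambda A$ is abelian because $\lambda(a,a')=\lambda(a',a)$, and $h_\lambda p_\lambda(m)=h_\lambda(m,0)=2m$ gives $h_\lambda p_\lambda=2\id$, so the induced involution is trivial. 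Hence $L(\lambda)=(\lambda,\mu_\lambda)$ defines a functor $\SF\to\QF^{\,0}$.

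Next I would check that the forgetful functor $R$ sends $\QF^{\,0}$ into $\SF$: for $(\lambda,\mu)\in\QF^{\,0}$ the involution on $M_{ee}$ is trivial, so the hermitian identity of Lemma~\ref{lem:symmetries}, namely $\lambda(a',a)=\lambda(a,a')^*$, reduces to $\lambda(a',a)=\lambda(a,a')$, i.e.\ $\lambda$ is symmetric. Since $\SF\subset\HF$ is exactly the full subcategory on which $*=\id$ (a hermitian form over the trivial involution being the same data as a symmetric form), and since both inclusions $\SF\hookrightarrow\HF$ and $\QF^{\,0}\hookrightarrow\QF$ are full, the adjunction isomorphism
\[
\QF(L(\lambda),(\lambda',\mu'))\;\cong\;\HF(\lambda,\lambda')
\]
from the hermitian case restricts verbatim, for symmetric $\lambda$ and $(\lambda',\mu')\in\QF^{\,0}$, to the desired bijection $\QF^{\,0}(L(\lambda),(\lambda',\mu'))\cong\SF(\lambda,\lambda')$; every morphism appearing on either side already lies between objects of the subcategories, and the explicit inverse $(\alpha,\beta_{ee})\mapsto\beta_e$ with $\beta_e(m,a):=p'\circ\beta_{ee}(m)+\mu'\circ\alpha(a)$, together with its homomorphism and commutativity verifications, is word-for-word the one given above. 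This yields the left adjoint, hence the second statement.

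The initial-object statement then follows as in the discussion preceding the hermitian theorem: restricting the adjunction to the category $\QR_\lambda$ of quadratic refinements of a fixed symmetric $\lambda$ with trivial involution, all of whose morphisms lie over $\id_\lambda$, identifies $\QR_\lambda(\mu_\lambda,\mu)$ with the single morphism of $\SF(\lambda,\lambda)$ lying over $\id_\lambda$, so $(\lambda,\mu_\lambda)$ is initial. The main point to be careful about is not any new computation---every homomorphism and diagram check is inherited unchanged from the hermitian proof---but the bookkeeping that the relevant subcategories are genuinely full and that passing to trivial involutions introduces no new objects or morphisms.
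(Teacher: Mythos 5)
Your proposal is correct and takes essentially the same route the paper intends: the paper gives no written proof of this theorem, merely observing that for symmetric $\lambda$ the construction of Example~\ref{ex:universal} yields an abelian $M_e$ with $h_\lambda p_\lambda=2\id$ and asserting that ``it is not hard to see'' the result follows. Your verification that $L$ and $R$ restrict to the full subcategories cut out by $*=\id$, so that the adjunction isomorphism of the hermitian case carries over verbatim, is precisely the bookkeeping the paper leaves to the reader.
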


\begin{rem}
It follows from the above considerations that a quadratic form $(\lambda,\mu)$ is universal if and only if the homomorphism
\[
M_{ee} \times_\lambda A \to M_e  \quad \text{ given by } \quad (m,a)\mapsto p(m) + \mu(a)
\]
is an isomorphism. This is turn is equivalent to
\begin{enumerate}
\item $p:M_{ee} \to M_e$ is injective and
\item $\mu : A \to M_e/\im(p)$ is an isomorphism.
\end{enumerate}
\end{rem}

\subsection{Commutative quadratic groups and forms}
The case where $*$ is non-trivial but the anti-involution $\dagger$ on $M_e$ is trivial is even more interesting. In this case, $\lambda$ is still hermitian with respect to $*$ but one is only interested in quadratic refinements $\mu$ that are symmetric in the sense that $\mu(-a)=\mu(a)$. This case deserves its own definition:

\begin{defn}\label{def:commutative quadratic group} A {\em commutative quadratic group}
\[
\frak M = (M_{e} \overset{h}{\to} M_{ee} \overset{p}{\to} M_{e})
\]
consists of two abelian groups $M_e, M_{ee}$ and two homomorphism $h,p$ satisfying $ph=2\id$. 
\end{defn}

In fact, a commutative quadratic group is the same thing as a non-commutative quadratic group with trivial anti-involution $\dagger$. This comes from the fact that the squaring map $x\mapsto 2x$ is a homomorphism if and only if $M_e$ is commutative. Our universal example $\frak M_\lambda$ is in general {\em not} commutative because one gets in this case
\begin{align*}
& \dagger_\lambda(m,a) = p_\lambda \circ h_\lambda(m,a) - (m,a) = p_\lambda(m+m^* +\lambda(a,a)) - (m,a) =\\
& (m+m^* +\lambda(a,a),0) + (-m-\lambda(a,a),-a) = (m^*, -a)
\end{align*}
However, we shall see in Theorem~\ref{thm:commutative universal} that we can just divide by these relations $(m,a) = (m^*,-a)$ to obtain another universal quadratic refinement of a given hermitian form $\lambda$ but this time with values in a {\em commutative} quadratic group. Before we work this out, let us mention the essential example from topology.
\begin{ex}
Consider a manifold $X$ of dimension~$2n$ and let $\frak M$ be as in (M) from Example~\ref{ex:M} with $M=\Z[\pi_1X]$.  In particular, we have $ph-\id=\dagger=\id$ but in general the involution $*$ is non-trivial. On group elements, it is given by
\[
g^* := (-1)^n w_1(g) g^{-1}
\]
with $w_1$ (induced by) the first Stiefel-Whitney class of $X$. Then the equivariant intersection form $\lambda=\lambda_X$ on $\pi_nX$ is bilinear and hermitian as required. Moreover, the self-intersection invariant $\mu_X$ defined by Wall \cite{Wa} gives a quadratic refinement of $\lambda_X$, at least on the subgroup $A$ of elements represented by immersed $n$-spheres with vanishing normal Euler number. 
\end{ex}

In our main Theorem~\ref{thm:commutative universal} below, we shall use the following
\begin{lem}\label{lem:square}
If $(\lambda,\mu):A\to \frak M$ is a commutative quadratic form, then $\mu(n\cdot a)=n^2\cdot \mu(a)$ for all integers $n\in \Z$.
\end{lem}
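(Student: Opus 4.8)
The plan is to prove $\mu(n\cdot a)=n^2\cdot\mu(a)$ by induction on $n$, using the defining quadratic property (i) of Definition~\ref{def:quadratic} together with the fact that we are in the \emph{commutative} case, so the anti-involution $\dagger$ is trivial and hence $\mu(-a)=\mu(a)^\dagger=\mu(a)$ by Lemma~\ref{lem:symmetries}. The key identity to exploit is the quadratic law
\[
\mu(a+a')=\mu(a)+\mu(a')+p\circ\lambda(a,a'),
\]
which tells us exactly how $\mu$ fails to be additive, with the failure measured by $p\circ\lambda$.

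First I would establish the base cases: $n=0$ gives $\mu(0)=0=0^2\mu(a)$ (which follows from property (i) with $a=a'=0$, or from $\mu(0)=\mu(0)+\mu(0)+p\lambda(0,0)$), and $n=1$ is trivial. The case $n=-1$ is precisely $\mu(-a)=\mu(a)$, which equals $(-1)^2\mu(a)$ and holds by triviality of $\dagger$. For the inductive step, I would compute $\mu((n+1)a)=\mu(na+a)$ using property (i):
\[
\mu((n+1)a)=\mu(na)+\mu(a)+p\circ\lambda(na,a).
\]
By bilinearity of $\lambda$ we have $\lambda(na,a)=n\cdot\lambda(a,a)$, and by property (ii) of a quadratic form, $\lambda(a,a)=h\circ\mu(a)$. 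Since $ph=2\id$ in a commutative quadratic group (Definition~\ref{def:commutative quadratic group}), this gives $p\circ\lambda(na,a)=n\cdot p\circ h\circ\mu(a)=2n\cdot\mu(a)$. Substituting the inductive hypothesis $\mu(na)=n^2\mu(a)$ yields
\[
\mu((n+1)a)=n^2\mu(a)+\mu(a)+2n\mu(a)=(n+1)^2\mu(a),
\]
completing the induction for $n\ge 0$.

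To extend to negative integers, I would either run a parallel downward induction using the same computation, or more cleanly observe that $\mu(-na)=\mu(na)=n^2\mu(a)=(-n)^2\mu(a)$ directly from the $n=-1$ case combined with the result for positive $n$. The main (though modest) obstacle is ensuring that the step $p\circ h=2\id$ is used correctly: this is exactly where commutativity of the quadratic group enters, and it is the reason the statement is phrased for \emph{commutative} quadratic forms rather than general ones. In the non-commutative setting $php=2p$ need not hold, and indeed $ph$ relates to the anti-involution $\dagger$ rather than being multiplication by $2$, so the clean formula $n^2\mu(a)$ would fail. Everything else is routine algebraic bookkeeping that the reader can fill in.
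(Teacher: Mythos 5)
Your proof is correct and follows essentially the same route as the paper: reduce to $n\ge 0$ via $\mu(-a)=\mu(a)$ (triviality of $\dagger$, from Lemma~\ref{lem:symmetries}), then induct using property (i), bilinearity of $\lambda$, property (ii), and $p\circ h=2\,\id$. The only difference is that you spell out the base cases and the extension to negative $n$ a bit more explicitly, which the paper leaves implicit.
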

Here we say that a quadratic form $(\lambda,\mu):A\to \frak M$ is {\em commutative} if the target quadratic group $\frak M$ is commutative, i.e.\ if the anti-involution $\dagger$ is trivial (Definition~\ref{def:quadratic}). 
\begin{proof}
Since the involution $\dagger=ph-\id$ is trivial by assumption, we already know that $\mu(-a)=\mu(a)$ from Lemma~\ref{lem:symmetries}. Thus it suffices to prove the claim for positive $n>1$ by induction:
\begin{align*}
\mu((n+1)\cdot a) &= \mu(n\cdot a) + \mu(a) + p\circ\lambda(n\cdot a,a) \\
&= n^2\cdot \mu(a) + \mu(a) + n\cdot p\circ h\circ \mu(a) \\
&= (n^2+1)\cdot \mu(a) + n \cdot 2\cdot \mu(a) = (n+1)^2\cdot \mu(a)
\end{align*}
Here we again used the fact that $p\circ h=2\id$.
\end{proof}

\begin{thm}\label{thm:commutative universal}
Any hermitian bilinear form $\lambda$ has a universal commutative quadratic refinement. In fact, the forgetful functor $R(\lambda,\mu)=\lambda$ from the category $\CQF$ of commutative quadratic forms to the category $\HF$ of hermitian forms has a left adjoint $L:\HF\to\CQF, L(\lambda)=(\lambda,\mu^c_\lambda)$.
\end{thm}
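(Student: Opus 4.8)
The plan is to manufacture the universal commutative refinement $(\lambda,\mu^c_\lambda)$ by \emph{universally trivializing} the anti-involution $\dagger$ on the non-commutative universal example $\frak M_\lambda=(M_{ee}\overset{p_\lambda}{\to}M_e\overset{h_\lambda}{\to}M_{ee})$ of Example~\ref{ex:universal}, and then to read off the adjunction almost for free from Theorem~\ref{thm:universal}. Concretely, I would leave $M_{ee}$ (with its involution $*$) untouched and set $M^c_e:=M_e/S$, where $S$ is the normal subgroup generated by all elements $\dagger(x)-x$, i.e.\ by the relations $(m,a)=(m^*,-a)$ computed in the discussion preceding Definition~\ref{def:commutative quadratic group}. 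The structure maps are the induced ones: $h^c$ induced by $h_\lambda$, $p^c:=\pi\circ p_\lambda$, and $\mu^c_\lambda:=\pi\circ\mu_\lambda$, where $\pi:M_e\to M^c_e$ is the projection.

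First I would check that this data descends and defines a \emph{commutative} quadratic group. Since $h_\lambda\dagger=h_\lambda p_\lambda h_\lambda-h_\lambda=2h_\lambda-h_\lambda=h_\lambda$ by condition (iii), every generator $\dagger(x)-x$ lies in $\ker h_\lambda$, and as $M_{ee}$ is abelian it follows that $h_\lambda(S)=0$, so $h^c$ is well defined. Because $\dagger$ is an anti-involution with $\dagger^2=\id$ one has $\dagger(\dagger(x)-x)=-(\dagger(x)-x)$, so $S$ is $\dagger$-invariant and $\dagger$ descends to $M^c_e$; by construction the descended anti-involution is the identity. An anti-involution equal to the identity forces its domain to be abelian (as $x+y=\dagger(y+x)=\dagger(x)+\dagger(y)=\ldots$), so $M^c_e$ is abelian, and the identity $ph=\dagger+\id$ becomes $p^ch^c=2\id$. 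Thus $\frak M^c_\lambda$ is a commutative quadratic group, and properties (i),(ii) of a quadratic form pass to the quotient verbatim, so $(\lambda,\mu^c_\lambda)$ is a commutative quadratic refinement of $\lambda$.

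The universality is where the earlier work pays off. Given any commutative quadratic form $(\lambda',\mu')$ and a morphism of hermitian forms $(\alpha,\beta_{ee}):\lambda\to\lambda'$, the left-adjointness established in Theorem~\ref{thm:universal} produces a \emph{unique} morphism $\beta_e:M_e\to M'_e$ of the underlying non-commutative quadratic forms. Any such morphism intertwines the anti-involutions: from $\beta_{ee}h=h'\beta_e$ and $\beta_e p=p'\beta_{ee}$ one computes $\dagger'\beta_e=(p'h'-\id)\beta_e=p'\beta_{ee}h-\beta_e=\beta_e ph-\beta_e=\beta_e\dagger$. Since $\frak M'$ is commutative we have $\dagger'=\id$, hence $\beta_e\dagger=\beta_e$ and $\beta_e$ annihilates every generator $\dagger(x)-x$ of $S$. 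Therefore $\beta_e$ factors uniquely through $\pi$ as a homomorphism $\beta^c_e:M^c_e\to M'_e$, and this factorization is the unique morphism $\frak M^c_\lambda\to\frak M'$ lying over $(\alpha,\beta_{ee})$. This exhibits the natural bijection $\CQF(L(\lambda),(\lambda',\mu'))\cong\HF(\lambda,\lambda')$ and hence displays $L(\lambda)=(\lambda,\mu^c_\lambda)$ as the required left adjoint, with $(\lambda,\mu^c_\lambda)$ initial in the category of commutative quadratic refinements of $\lambda$.

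The only genuinely delicate point I anticipate is the bookkeeping around $S$: one must confirm that the quotient group, the descent of $h_\lambda$ and $\dagger$, and the commutativity of $M^c_e$ are all consistent, so that $\dagger$ becomes trivial without collapsing more than intended (for instance, the relation $p_\lambda(m)=p_\lambda(m^*)$ forced by $S$ is exactly what kills the commutators $[x,y]=p_\lambda(\lambda(a,a')^*-\lambda(a,a'))$). This is purely mechanical once one notes that each generator of $S$ lies in $\ker h_\lambda$ and is permuted up to sign by $\dagger$; the conceptual content — that commutativity of the target is \emph{precisely} the condition $\dagger'=\id$, and that this condition is automatically respected by any morphism out of the non-commutative universal object — is what renders the adjunction immediate.
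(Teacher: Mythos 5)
Your construction is exactly the paper's --- $M^c_e$ is the quotient of the non-commutative universal target $M_{ee}\times_\lambda A$ by the relations $(m,a)=(m^*,-a)$, i.e.\ by forcing $\dagger$ to be trivial --- and your proof is correct, but your verifications take a noticeably more formal route than the paper's. The paper splits the relation $(m^*-m-2\lambda(a,a),-2a)=0$ into the two families $(m^*,0)=(m,0)$ and $(-2\lambda(a,a),-2a)=0$ and then checks by explicit element-wise computation both that $h_\lambda$ descends and that the candidate $\beta_e(m,a)=p'\circ\beta_{ee}(m)+\mu'\circ\alpha(a)$ kills each family; the second check requires Lemma~\ref{lem:square} (hence the commutativity of $\frak M'$) to evaluate $\mu'(-2\alpha(a))$. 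You instead derive both descents from two formal identities: $h_\lambda\circ\dagger=h_\lambda$ (from $hph=2h$) shows the generators of $S$ lie in $\ker h_\lambda$, and the intertwining $\dagger'\circ\beta_e=\beta_e\circ\dagger$, valid for any morphism of quadratic groups, shows $\beta_e(S)=0$ whenever $\dagger'=\id$. This bypasses Lemma~\ref{lem:square} entirely and makes transparent where commutativity of the target enters (precisely as $\dagger'=\id$), at the cost of some care with normal closures and the descent of the anti-involution in a possibly non-abelian group --- care you do take. The only step left implicit is the uniqueness clause: a morphism $\gamma:\frak M^c_\lambda\to\frak M'$ over $(\alpha,\beta_{ee})$ pulls back along the surjection $\pi$ to a morphism out of $\frak M_\lambda$, which equals $\beta_e$ by Theorem~\ref{thm:universal}, forcing $\gamma=\beta^c_e$; that is one line and worth writing down.
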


\begin{proof}
As hinted to above, we will force the anti-involution $\dagger$ to be trivial in the universal construction of Theorem~\ref{thm:universal}. This means that we should define the universal (commutative) group $M^c_e$ as the quotient of our previously used group $M_{ee} \times_\lambda A$ by the relations
\begin{align*}
0 &= (m^*, -a) - (m,a) = (m^*,-a) + (-m-\lambda(a,a)),-a) \\
& =(m^* - m - 2\lambda(a,a), -2a)
\end{align*}
By setting $a$ respectively $m$ to zero, these relations imply
\[
(m^*,0) = (m,0)  \quad \text{ and } \quad (-2\lambda(a,a), -2a) = 0
\]
Vice versa, these two types of equations imply the general ones and hence we see that $M^c_e$ is the quotient of the centrally extended group
\[
\xymatrix{
1 \ar[r]  & M_{ee}/(m^*=m) \ar[r]& M_{ee}/(m^*=m) \times_\lambda A  \ar[r]&  A \ar[r] & 1
}
\]
by the relations $(-2\lambda(a,a), -2a) = 0$. We write elements in $M^c_e$ as $[m,a]$ with the above relations understood. It then follows that $p^c_\lambda(m):= [m,0]$ is a homomorphism $M_{ee}\to M^c_e$ (which is in general not any more injective). Moreover, our original formula leads to a homomorphism $h^c_\lambda:M^c_e\to M_{ee}$ given by
\[
h^c_\lambda[m,a]:= h_\lambda(m,a)= m+m^* + \lambda(a,a)
\]
To see that this is well defined, observe $h_\lambda(m^*,0) = m+m^* = h_\lambda(m,0)$ and
\[
h_\lambda(-2\lambda(a,a),-2a)= -4\lambda(a,a) + \lambda(-2a,-2a) =0 
\]
Finally, we set $\mu^c_\lambda(a):=[0,a]$ to obtain a commutative quadratic refinement of $\lambda$ which is proven exactly as in Theorem~\ref{thm:universal}. 

To show that $\mu^c_\lambda$ is universal, or more generally, that $L(\lambda):=(\mu^c_\lambda,\lambda)$ is a left adjoint of the forgetful functor $R$, we proceed as in the proof of Theorem~\ref{thm:universal}: 
We are given a morphism $(\alpha,\beta_{ee}): \lambda\to \lambda'$ consisting of homomorphisms $\alpha:A\to A'$ and $\beta_{ee}: (M_{ee},*) \to (M_{ee}',*')$ such that
\[
\lambda'(\alpha(a_1), \alpha(a_2)) = \beta_{ee} \circ \lambda(a_1, a_2) \in M_{ee}' \quad \forall\ a_i\in A.
\]
We need to show that there is a {\em unique} homomorphism $\beta_e: M^c_e \to M_e'$ such that the three diagrams from the proof of Theorem~\ref{thm:universal} commute.
We can use the same formulas as before, if we check that they vanish on our new relations in $M^c_e$. For this we'll have to use that the given quadratic group $\frak M'$ is {\em commutative}. Recall the formula
\[
\beta_e(m,a) = p'\circ\beta_{ee}(m) + \mu'\circ \alpha(a)
\]
Splitting our relations into two parts as above, it suffices to show that
\[
p'\circ\beta_{ee}(m^*) = p'\circ\beta_{ee}(m)  \quad \text{ and } \quad \beta_e(-2\lambda(a,a),-2a)=0
\]
The first equation follows from part (iii) of Lemma~\ref{lem:involutions} and the fact that we are assuming that $\dagger'=\id$:
\begin{align*}
 p'\circ\beta_{ee}(m^*) = (p'\circ *')(\beta_{ee}(m)) =  (\dagger' \circ p')(\beta_{ee}(m)) = p'\circ\beta_{ee}(m)
\end{align*}
For the second equation we compute:
\begin{align*}
& \beta_e(-2\lambda(a,a),-2a)=p'\circ\beta_{ee}(-2\lambda(a,a)) + \mu'\circ \alpha(-2a)=\\
& -2(p' \circ\lambda'(\alpha(a),\alpha(a))) + \mu'\circ \alpha(-2a)=\\
 &-2(\mu'(\alpha(a) + \alpha(a)) - \mu'(\alpha(a)) - \mu'(\alpha(a))) + \mu'(-2\alpha(a))=\\
 &-2(4\mu'(\alpha(a)) - 2\mu'(\alpha(a))) + 4\mu'(\alpha(a))= -4\mu'(\alpha(a)) + 4\mu'(\alpha(a))=0
\end{align*}
We used Lemma~\ref{lem:square} for $n=\pm 2$ and hence the commutativity of $\frak M$.
\end{proof}

\subsection{Symmetric quadratic groups and forms}

The simplest case of a quadratic group is where both $*$ and $\dagger$ are trivial. Let's call such a quadratic group $\frak M= (M_{e} \overset{h}{\to} M_{ee} \overset{p}{\to} M_{e})$ {\em symmetric}. Equivalently, this means that $hp=2\id=ph$ (and hence $M_e$ is commutative). Then a quadratic form $(\lambda,\mu):A\to\frak M$ will automatically be {\em symmetric} in the sense that
\[
\lambda(a,a') = \lambda(a',a)  \quad \text{ and } \quad \mu(-a) = \mu(a)\quad \forall \ a\in A.
\]
We call $\mu$ a {\em symmetric quadratic refinement} of $\lambda$ and obtain a category of symmetric quadratic forms with a forgetful functor $R$ to the category of symmetric forms. It is not hard to show that the construction in Theorem~\ref{thm:commutative universal} gives a universal symmetric quadratic refinement $\mu^c_\lambda$ for any given symmetric bilinear form $\lambda$. More precisely, 

\begin{thm}\label{thm:symmetric universal}
Any symmetric bilinear form $\lambda$ has a universal symmetric quadratic refinement. In fact, the forgetful functor $R(\lambda,\mu)=\lambda$ from the category $\SQF$ of symmetric quadratic forms to the category $\SF$ of symmetric forms has a left adjoint $L:\HF\to\CQF, L(\lambda)=(\lambda,\mu^c_\lambda)$.
\end{thm}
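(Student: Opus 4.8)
The plan is to deduce this from the commutative case (Theorem~\ref{thm:commutative universal}) by restricting the adjunction to suitable full subcategories. A symmetric form is precisely a hermitian form whose target $M_{ee}$ carries the \emph{trivial} involution $*=\id$, and since morphisms of hermitian forms automatically respect a trivial involution, $\SF$ is a full subcategory of $\HF$. Likewise a symmetric quadratic group is exactly a commutative quadratic group with $*=\id$, and the commuting-square conditions defining morphisms of quadratic groups impose no extra constraint when the involutions are trivial, so $\SQF$ is a full subcategory of $\CQF$. The forgetful functor $R\colon\CQF\to\HF$ sends $\SQF$ into $\SF$ (the underlying bilinear form of a symmetric quadratic form is symmetric), hence restricts to $R\colon\SQF\to\SF$. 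Thus everything reduces to showing that the left adjoint $L$ of Theorem~\ref{thm:commutative universal} also restricts, i.e.\ that $L(\lambda)=(\lambda,\mu^c_\lambda)\in\SQF$ whenever $\lambda\in\SF$.

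This last point is the crux, and it is a short computation with the explicit model $\frak{M}^c_\lambda$ built in the proof of Theorem~\ref{thm:commutative universal}. When $*=\id$ the first defining relation $(m^*,0)=(m,0)$ of $M^c_e$ becomes vacuous, so that $M^c_e$ is simply $M_{ee}\times_\lambda A$ modulo $(-2\lambda(a,a),-2a)=0$. Evaluating the structure maps on $p^c_\lambda(m)=[m,0]$ gives
\[
h^c_\lambda p^c_\lambda(m)=h_\lambda(m,0)=m+m^*=2m,
\]
so the involution attached to $\frak{M}^c_\lambda$ on $M_{ee}$ is $h^c_\lambda p^c_\lambda-\id=2\id-\id=\id$, i.e.\ trivial. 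Since $\frak{M}^c_\lambda$ is commutative, its anti-involution $\dagger$ is already trivial, and therefore $\frak{M}^c_\lambda$ is a \emph{symmetric} quadratic group in the sense of the paragraph preceding the theorem ($hp=2\id=ph$ with both $*$ and $\dagger$ trivial). Hence $(\lambda,\mu^c_\lambda)$ is a symmetric quadratic form and $L$ does restrict to a functor $\SF\to\SQF$.

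With the restrictions in place, the adjunction is immediate. For $\lambda\in\SF$ and $(\lambda',\mu')\in\SQF$, fullness of the two subcategories identifies
\[
\SQF\bigl(L(\lambda),(\lambda',\mu')\bigr)=\CQF\bigl(L(\lambda),(\lambda',\mu')\bigr)\quad\text{and}\quad\SF(\lambda,\lambda')=\HF(\lambda,\lambda'),
\]
and the natural bijection of Theorem~\ref{thm:commutative universal} between these $\CQF$- and $\HF$-hom-sets therefore descends to a natural bijection $\SQF(L(\lambda),(\lambda',\mu'))\cong\SF(\lambda,R(\lambda',\mu'))$. This exhibits $L\colon\SF\to\SQF$ as left adjoint to $R$. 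Specializing to $\lambda'=\lambda$ and restricting to morphisms lying over $\id_\lambda$ (exactly as in the discussion preceding Theorem~\ref{thm:universal}) shows that $(\lambda,\mu^c_\lambda)$ is initial in the category of symmetric quadratic refinements of $\lambda$, yielding the universal symmetric quadratic refinement.

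The only genuine obstacle is the verification in the second paragraph that $\frak{M}^c_\lambda$ becomes symmetric for symmetric input; the surrounding bookkeeping (fullness of $\SF\hookrightarrow\HF$ and $\SQF\hookrightarrow\CQF$, and stability of $R$ and $L$ under restriction) is routine once one checks that no morphism conditions are lost on passing to the symmetric setting.
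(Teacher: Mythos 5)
Your proposal is correct and follows the same route the paper intends: Theorem~\ref{thm:symmetric universal} is stated in the paper with only the remark that ``the construction in Theorem~\ref{thm:commutative universal} gives a universal symmetric quadratic refinement,'' and you have supplied precisely the missing verification --- that for trivial $*$ the relation $(m^*,0)=(m,0)$ is vacuous (as also noted in Remark~\ref{rem:injective}), that $h^c_\lambda p^c_\lambda=2\id$ makes the induced involution trivial so $\frak{M}^c_\lambda$ is symmetric, and that the adjunction restricts along the full subcategories $\SF\subset\HF$ and $\SQF\subset\CQF$. No gaps; this is exactly the ``not hard to show'' argument the authors had in mind.
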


\begin{rem}\label{rem:injective}
We observe that the map $p^c_\lambda: M_{ee}\to M^c_e$ is a monomorphism in this easiest, symmetric, case, just like it was in the hardest, non-commutative, case. This can be seen by noting that the first set of relations $(m^*,0)=(m,0)$ is redundant if the involution $*$ is trivial. Therefore, if $0=p^c_\lambda(m) = [m,0]$ then $(m,0)$ must come from the second set of relations, i.e.\ it must be of the form 
\[
(m,0)=(-2 \lambda(a,a), -2a)  \quad \text{ for some } a\in A.
\]
 This implies that $2a=0$ and hence $\lambda(2a,a)=0$ which in turn means $m=0$.
\end{rem}

\begin{cor}\label{cor:exact} There is an exact sequence
\[
\xymatrix{
1 \ar[r]  & M_{ee}\ar[r]^p & M_{e}^c  \ar[r]&  \Z_2 \otimes A \ar[r] & 1
}
\]
\end{cor}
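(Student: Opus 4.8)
The plan is to work in the symmetric setting of Theorem~\ref{thm:symmetric universal}, where $M^c_e$ is the quotient of the central extension $M_{ee}\times_\lambda A$ by the relations $(-2\lambda(a,a),-2a)=0$ (the relations $(m^*,0)=(m,0)$ being vacuous since $*$ is trivial). Exactness at $M_{ee}$, i.e.\ injectivity of $p=p^c_\lambda$, has already been recorded in Remark~\ref{rem:injective}, so nothing new is needed on the left. It therefore suffices to produce the surjection onto $\Z_2\otimes A$ and to verify exactness in the middle.

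For the right-hand map I would start from the projection $M_{ee}\times_\lambda A \to A$, $(m,a)\mapsto a$, which is a group homomorphism directly from the definition of the central extension. Post-composing with the reduction $A\to \Z_2\otimes A = A/2A$ yields a homomorphism whose value on the defining relation element $(-2\lambda(a,a),-2a)$ is $-2a\equiv 0$. Hence it descends to a homomorphism $\pi: M^c_e\to\Z_2\otimes A$, $[m,a]\mapsto a\otimes 1$. Surjectivity is immediate since $[0,a]\mapsto a\otimes 1$ and such elements generate $\Z_2\otimes A$, and the inclusion $\im(p)\subseteq\ker(\pi)$ is clear from $\pi([m,0])=0$.

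The crux is the reverse inclusion $\ker(\pi)\subseteq\im(p)$, which is the only step that uses the relations in an essential way. An element of $\ker(\pi)$ has the form $[m,2b]$ for some $b\in A$. Using that the relation element $(-2\lambda(b,b),-2b)$ is trivial in $M^c_e$, I add it to $(m,2b)$ and compute in the twisted group law $(x,a)+(x',a')=(x+x'-\lambda(a,a'),a+a')$: the second coordinate becomes $2b-2b=0$, while the first, using $\lambda(2b,-2b)=-4\lambda(b,b)$, collapses to $m+2\lambda(b,b)$. This gives $[m,2b]=[m+2\lambda(b,b),0]=p\bigl(m+2\lambda(b,b)\bigr)\in\im(p)$, establishing exactness in the middle. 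I expect this short bookkeeping in the twisted law to be the main (indeed essentially the only) obstacle; the remaining verifications are formal.
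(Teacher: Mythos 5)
Your proof is correct and follows essentially the same route the paper (implicitly) takes: injectivity of $p$ is exactly Remark~\ref{rem:injective}, and the remaining exactness is the evident bookkeeping in the quotient of the central extension $M_{ee}\times_\lambda A$ by the relations $(-2\lambda(a,a),-2a)=0$, which is all the paper relies on since it offers no further argument for the corollary. Your key computation $[m,2b]=[m+2\lambda(b,b),0]\in\im(p)$ is accurate and correctly identifies the one non-formal step.
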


\begin{exs}\label{ex:quadratic}
If $M_{ee}=M_{e}$ then $h=\id$ and $p=2\id$ is a canonical choice for which $\mu$ is determined by $\lambda$.  Another canonical choice is $p=\id$ and $h =2\id$. Then a quadratic refinement of $(M_{e}, h, p)$ with this choice exists exactly for even forms, at least for free groups $A$. Moreover, if $M_{ee}$ has no 2-torsion then a quadratic refinement is uniquely determined by the given even form. 

At the other extreme, consider $M_{ee}=M_{e}=\z$. If $A$ is a finite dimensional $\z$-vectorspace then non-singular symmetric bilinear forms $\lambda$ are classified by their rank and their {\em parity}, i.e.~whether they are even or odd, or equivalently, whether they admit a quadratic refinement or not. In the even case, quadratic forms $(\lambda,\mu)$ are classified by rank and {\em Arf invariant}. This additional invariant takes values in $\z$ and vanishes if and only if $\mu$ takes more elements to zero than to one (thus the Arf invariant is sometimes referred to as the ``democratic invariant''). 

If $\lambda$ is odd then the following trick allows one to still define Arf invariants and it motivates the introduction of $M_{e}$. Let again $A$ be a finite dimensional $\z$-vectorspace, $M_{ee}=\z$ and $M_{e}=\Z_4$ with the unique nontrivial homomorphisms $h, p$. Then any non-singular symmetric bilinear form $\lambda$ has a quadratic refinement $\mu$ and quadratic forms $(\lambda,\mu)$ are classified by rank and an Arf invariant with values in $\Z_8$. If $\lambda$ is even, this agrees with the previous Arf invariant via the linear inclusion $\z \subset \Z_8$.
\end{exs}

\subsection{Presentations for universal quadratic groups} \label{sec:presentations}
Consider a central group extension
\[
1 \to M \to G \overset{\pi}{\to} A \to 1
\]
and assume that $M$ and $A$ have presentations $ \langle m_i | n_j \rangle$ respectively $ \langle a_k | b_\ell \rangle $. To avoid confusion, we write groups multiplicatively for a while and switch back to additive notation when returning to hermitian forms. 

It is well known how to get a presentation for $G$: Pick a section $s:A\to G$ with $s(1)=1$ which is not necessarily multiplicative. Write a relation in $A$ as $b_\ell = a'_1 \cdots a'_r$, where $a'_i$ are generators of $A$ or their inverses, then
\[
1=s(1)=s(b_\ell)=s(a'_1)\cdots s(a'_r) \, w_\ell
\]
where $w_\ell=w_\ell(m_i)$ is a word in the generators of $M$. This equation follows from the fact that the projection $\pi$ is a homomorphism and for simplicity we have identified $M$ with its image in $G$. We obtain the presentation
\[
G = \langle m_i, \alpha_k\, |\, n_j, [m_i,\alpha_k], \beta_\ell \, w_\ell \rangle 
\]
where $\alpha_k:=s(a_k)$ and $\beta_\ell:=s(a'_1)\cdots s(a'_r)$ is the same word in the $\alpha_k$ as $b_\ell$ is in the $a_k$. The commutators $[m_j,\alpha_k]$ arise because we are assuming that the extension is central, in a more general case one would write out the action of $A$ on $M$. 

It will be useful to rewrite this presentation as follows. Observe that the section $s$ satisfies
\[
s(a_1a_2)=s(a_1)s(a_2) c(a_1,a_2)
\]
for a uniquely determined {\em cocycle} $c: A \times A\to M$. By induction one shows that
\begin{align*}
& s(a_1\cdots a_r) = s(a_1\cdots a_{r-1})s(a_r) c(a_1\cdots a_{r-1},a_r)= \cdots = \\
&s(a_1)\cdots s(a_r) c(a_1, a_2) c(a_1a_2,a_3)c(a_1a_2a_3,a_4)\cdots c(a_1\cdots a_{r-1},a_r)
\end{align*}
Comparing this expression with the definition of the word $w_\ell$ in the presentation of $G$, it follows that 
\[
w_\ell= c(a'_1, a'_2) c(a'_1a'_2,a'_3)\cdots c(a'_1\cdots a'_{r-1},a'_r) \ \in M
\]
so that the above presentation of $G$ is entirely expressed in terms of the cocycle $c$ (and does not depend on the section $s$ any more). 

Now assume that $\lambda:A \times A\to M$ is a hermitian form with respect to an involution $*$ on $M$. Then the universal (non-commutative) quadratic group $M_e$ from Example~\ref{ex:universal} is a central extension as above with cocycle $c=\lambda$. Reverting to additive notation, we see that 
\begin{align*}
w_\ell&= \lambda(a'_1, a'_2) + \lambda(a'_1+a'_2,a'_3)+ \cdots + \lambda(a'_1+\cdots +a'_{r-1},a'_r)\\
&= \sum_{1\leq i < j \leq r} \lambda(a'_i,a'_j)
\end{align*}
where the ordering of the summands is irrelevant because $M$ is central in $M_e$. Summarizing the above discussion, we get.
\begin{lem} \label{lem:presentation}
The universal (non-commutative) quadratic group $M_e$ corresponding to the hermitian form $\lambda$ has a presentation
\[
M_e= \langle m_i, \alpha_k\, |\, n_j, [m_i,\alpha_k], \beta_\ell + \sum_{1\leq i < j \leq r} \lambda(a'_i,a'_j) \rangle 
\]
where the generators $m_i, \alpha_k$ and words $n_j,\beta_\ell$ are defined as above. Moreover, the universal quadratic refinement $\mu: A\to M_e$ is a (in general non-multiplicative) section of the central extension and hence $\alpha_k = \mu(a_k)$ for the generators $a_k$ of $A$.
\end{lem}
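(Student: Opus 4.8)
The plan is to specialize the general presentation recipe for central extensions, developed throughout this subsection, to the single case $G = M_e$. The key observation is that the universal quadratic group $M_e = M_{ee}\times_\lambda A$ of Example~\ref{ex:universal} \emph{is} exactly a central extension $1\to M_{ee}\to M_e \to A\to 1$, with $M_{ee}$ central because the image of $p_\lambda$ was shown to lie in the center of $M_e$. Thus the general formula $G = \langle m_i,\alpha_k\mid n_j,[m_i,\alpha_k],\beta_\ell w_\ell\rangle$ applies verbatim once the relevant cocycle is identified, and the commutator relations $[m_i,\alpha_k]$ are justified precisely by this centrality, so that no nontrivial action of $A$ on $M_{ee}$ intervenes.

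First I would fix the section $s:=\mu_\lambda$, that is $s(a)=(0,a)$; since $\mu_\lambda(0)=(0,0)$ is the identity this is a legitimate (in general non-multiplicative) section, and by definition $\alpha_k = s(a_k)=\mu_\lambda(a_k)$, which already establishes the ``Moreover'' clause. Next I would compute the associated cocycle $c$, determined by $s(a+a')=s(a)+s(a')+c(a,a')$. Using the group law $(m,a)+(m',a')=(m+m'-\lambda(a,a'),a+a')$ one gets $s(a)+s(a')=(-\lambda(a,a'),a+a')$, whereas $s(a+a')=(0,a+a')$; comparing the two and invoking bilinearity of $\lambda$ yields $c(a,a')=\lambda(a,a')$. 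This is the one computation that merits care, solely because of the sign in front of $\lambda$ in the group law of Example~\ref{ex:universal} --- indeed that sign was inserted precisely so that the cocycle comes out to $+\lambda$ rather than $-\lambda$.

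With $c=\lambda$ identified, the general expansion already derived just above the lemma, namely $w_\ell = \sum_{1\leq i<j\leq r}\lambda(a'_i,a'_j)$, substitutes directly into $\beta_\ell w_\ell$ and produces the stated presentation of $M_e$ in additive notation. I do not expect any genuine obstacle here: the lemma is a summary of the preceding discussion, whose substantive content --- the presentation of an arbitrary central extension in terms of a cocycle, together with the additive expansion of $w_\ell$ --- has already been carried out. The proof therefore reduces to verifying that $M_e$ fits the general framework with cocycle exactly $\lambda$, which is the short cocycle computation above.
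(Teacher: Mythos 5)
Your proposal is correct and follows essentially the same route as the paper, which states the lemma as a summary of the preceding discussion: identify $M_e = M_{ee}\times_\lambda A$ as a central extension with section $s=\mu_\lambda$, check that the associated cocycle is $+\lambda$ (thanks to the sign in the group law), and substitute into the general cocycle presentation with $w_\ell=\sum_{1\leq i<j\leq r}\lambda(a'_i,a'_j)$. The only cosmetic difference is that you make the cocycle verification explicit where the paper simply asserts $c=\lambda$; this is a welcome addition rather than a deviation.
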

As discussed in Theorem~\ref{thm:commutative universal}, we get the universal {\em commutative} quadratic group $M_e^c$ for $\lambda$ by adding the relations $(m^*,0) = (m,0)$ and $(-2\lambda(a,a),-2a)=0$. The latter can be rewritten in the form $2(0,a)=(\lambda(a,a),0)$. In the current notation, where $(m,0)$ is identified with $m\in M$, we obtain the relations
\[
m^* = m  \quad \text{and} \quad 2 \mu(a)= \lambda(a,a)  \ \in M_e^c \quad\forall\ m\in M, a\in A.
\]
Recalling that $A,M$ and $M_e^c$ are {\em commutative} groups, we can write our presentation in that category
to obtain
\begin{lem} \label{lem:commutative presentation}
The universal (commutative) quadratic group $M^c_e$ corresponding to the hermitian form $\lambda:A \times A\to M$ has a presentation
\[
M^c_e= \langle m_i, \mu(a_k)\, |\, n_j, \beta_\ell + \sum_{1\leq i < j \leq r} \lambda(a'_i,a'_j) , m^* = m  ,2 \cdot\mu(a)= \lambda(a,a) \ \rangle 
\]
Here $ \langle m_i | n_j \rangle$ is a presentation of $M$ and $a_k$ are generators of $A$. Moreover, for every relation $b_\ell = \sum_{i=1}^r a'_i$ in $A$, we use the word $\beta_\ell:= \sum_{i=1}^r \mu(a'_i)$.
\end{lem}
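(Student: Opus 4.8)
The plan is to read off this presentation from the non-commutative presentation of Lemma~\ref{lem:presentation} by adjoining the two families of relations that cut $M^c_e$ out of $M_e$, and then to pass to the category of abelian groups, where the centrality relations $[m_i,\alpha_k]$ become vacuous and can be discarded.

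First I would recall from Lemma~\ref{lem:presentation} the presentation
\[
M_e= \langle m_i, \alpha_k\, |\, n_j, [m_i,\alpha_k], \beta_\ell + \sum_{1\leq i < j \leq r} \lambda(a'_i,a'_j) \rangle,
\]
in which $\alpha_k=\mu(a_k)$ and $\beta_\ell$ is the word in the $\alpha_k$ associated to a relation $b_\ell=\sum_i a'_i$ of $A$. By Theorem~\ref{thm:commutative universal} the group $M^c_e$ is the quotient of $M_e$ by the relations $(m^*,0)=(m,0)$ and $(-2\lambda(a,a),-2a)=0$, which in the additive notation used here read $m^*=m$ and $2\mu(a)=\lambda(a,a)$. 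Since adjoining relations to a presentation presents the corresponding quotient, appending these two families to the display above is already a presentation of $M^c_e$, albeit still in the non-abelian category.

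It remains to rewrite this in the category of abelian groups. Here the key point is that $M^c_e$ is abelian: this is built into Theorem~\ref{thm:commutative universal}, whose construction forces the anti-involution $\dagger$ to be trivial, and by Definition~\ref{def:commutative quadratic group} triviality of $\dagger$ is equivalent to commutativity of $M_e$. Concretely, once $m^*=m$ holds the involution $*$ on $M_{ee}$ is trivial, so the hermitian symmetry $\lambda(a',a)=\lambda(a,a')^*$ of Lemma~\ref{lem:symmetries} becomes the naive symmetry $\lambda(a',a)=\lambda(a,a')$; computing the commutator of $\mu(a)=(0,a)$ and $\mu(a')=(0,a')$ in the central extension of Example~\ref{ex:universal} yields $(\lambda(a',a)-\lambda(a,a'),0)$, which now vanishes. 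Thus all generators commute. Since $M^c_e$ equals its own abelianization, I would then abelianize the non-abelian presentation: the relations $[m_i,\alpha_k]$ map to the trivial relation and drop out, while the remaining relations descend verbatim, producing exactly the claimed presentation with $\beta_\ell=\sum_{i=1}^r\mu(a'_i)$. The step I expect to require the most care --- and the only one with genuine content beyond bookkeeping --- is this verification that $M^c_e$ is abelian, since it is precisely what guarantees that discarding the commutator relations and reinterpreting the presentation abelianly computes $M^c_e$ itself rather than a proper quotient.
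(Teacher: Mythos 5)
Your proposal is correct and follows essentially the same route as the paper: the paper likewise obtains the presentation by adjoining the relations $m^*=m$ and $2\mu(a)=\lambda(a,a)$ to the non-commutative presentation of Lemma~\ref{lem:presentation} and then rewriting it in the category of abelian groups, discarding the commutators $[m_i,\alpha_k]$. Your explicit check that $M^c_e$ is abelian (via the vanishing of the commutator $(\lambda(a',a)-\lambda(a,a'),0)$ once $*$ is killed) is a welcome elaboration of a point the paper leaves implicit in Theorem~\ref{thm:commutative universal}.
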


\subsection{Twisted intersection invariants and a universal quadratic group}

If we apply this construction to the universal inner product on order $n$ rooted trees
\[
\langle \ , \ \rangle:\ \cL_{n}(m) \times \cL_{n}(m) \longrightarrow \cT_{2n}(m)=:\cT_{2n}(m)_{ee}
\]
we obtain a universal symmetric quadratic refinement 
\[
q:=\mu^c_{\langle \ , \ \rangle}: \cL_{n}(m)\to \cT_{2n}(m)^c_e
\]
Let us compute the presentation from Lemma~\ref{lem:commutative presentation} in this case.
Recall that the generators of $\cL_{n}(m)$ are rooted trees $J$ of order $n$ and the relations are the AS and IHX 
relations from Figure~\ref{fig:relations}.  Similarly, $\cT_{2n}(m)$ is generated by unrooted trees $t$ of order $2n$, modulo the same relations. Putting these together, we see that $\cT_{2n}(m)^c_e$ is generated by unrooted trees $t$ of order $2n$ and elements $q(J)$, one for each rooted tree $J$ of order $n$. The three types of relations from Lemma~\ref{lem:commutative presentation} are:
\begin{itemize}
\item[$n_j:$] Relations in $M=\cT_{2n}(m)$ are ordinary AS and IHX relations for unrooted trees $t$,
\item[$\beta_\ell:$]  Every relation $b_\ell$ in $A=\cL_n(m)$ is an AS-relation $J+\bar J=0$ or an IHX-relation $I-H+X=0$. We obtain the following 
 {\em twisted}  AS- respectively IHX-relations:
 \begin{align*}
&0 = q(J) + q(\bar J) + \langle J, \bar J \rangle \\
&0= q(I)+q(H)+q(X) - \langle I,H \rangle + \langle I,X \rangle - \langle H,X \rangle
\end{align*}
\item[$c:$] $2\cdot q(J) = \langle J, J \rangle $
\end{itemize}
The last relation $c$ builds in the commutativity of the universal group as discussed above because we are in the easiest, symmetric, setting where the involution $*$ is trivial. Using relation $c$, the twisted AS relation simply becomes
\[
q(\bar J) = q(-J) = q(J)
\]
which was expected since we are in the symmetric case. This relation means that the orientation of $J$ is irrelevant when forming $q(J)$ and in fact, with some care one can see that the twisted IHX-relation makes sense for unoriented trees.

\begin{lem}\label{lem:quadratic-form}
This is a presentation for the target group $ \cT^\iinfty_{2n}(m)$ of twisted Whitney towers from Definition~\ref{def:twisted-tree-groups}. 
\end{lem}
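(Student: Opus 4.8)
The plan is to prove that the presentation just computed for $\cT_{2n}(m)^c_e$ is, after the identification of generators $t\mapsto t$ and $q(J)\mapsto J^\iinfty$, identical to the defining presentation of $\cT^\iinfty_{2n}(m)$ in Definition~\ref{def:twisted-tree-groups}. Both groups are generated by the unrooted trees $t$ of order $2n$ together with one extra generator for each rooted tree $J$ of order $n$, so the generators match on the nose and the content is entirely in matching the two sets of relations. I would establish the isomorphism by checking that the identity assignment on generators sends each relator of one presentation to a consequence of the relators of the other, in \emph{both} directions; since both resulting homomorphisms are the identity on generators, they are mutually inverse.

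The easy matches come first. The relations $n_j$ are literally the AS and IHX relations defining $\cT_{2n}(m)\subset\cT^\iinfty_{2n}(m)$, and the interior-twist relation $c$, namely $2\cdot q(J)=\langle J,J\rangle$, is exactly the relation $2\cdot J^\iinfty=\langle J,J\rangle$. For the symmetry relation I would use that $\bar J=-J$ in $\cL_n(m)$ and that the inner product is bilinear, so by the AS relations in $\cT_{2n}(m)$ one has $\langle J,\bar J\rangle=-\langle J,J\rangle$. Substituting this and $\langle J,J\rangle=2q(J)$ into the computed twisted AS relation $0=q(J)+q(\bar J)+\langle J,\bar J\rangle$ collapses it to $q(\bar J)=q(J)$, which is precisely $J^\iinfty=(-J)^\iinfty$; running the computation backwards recovers the twisted AS relation from the symmetry and interior-twist relations.

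The heart of the matter is the equivalence of the two twisted IHX relations. I would start from the computed relation
\[
0= q(I)+q(H)+q(X) - \langle I,H \rangle + \langle I,X \rangle - \langle H,X \rangle
\]
and use the IHX relation $I=H-X$ from $\cL_n(m)$. Because the inner product descends to a bilinear pairing $\cL_n(m)\times\cL_n(m)\to\cT_{2n}(m)$ (equivalently, because $\langle\,,\,\rangle$ respects IHX in each slot, a consequence of the $n_j$ relations), I may expand $\langle I,H\rangle=\langle H,H\rangle-\langle H,X\rangle$ and $\langle I,X\rangle=\langle H,X\rangle-\langle X,X\rangle$; collecting terms reduces the inner-product contribution to $-\langle H,H\rangle+\langle H,X\rangle-\langle X,X\rangle$. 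Feeding in the interior-twist relations $\langle H,H\rangle=2q(H)$ and $\langle X,X\rangle=2q(X)$ then collapses the computed relation to
\[
q(I)=q(H)+q(X)-\langle H,X\rangle,
\]
which is the defining twisted IHX relation $I^\iinfty=H^\iinfty+X^\iinfty-\langle H,X\rangle$. Every step is reversible, so modulo the $n_j$ and interior-twist relations the defining twisted IHX relation implies the computed one as well.

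Having matched all four families of relations in both directions, the identity assignment $t\mapsto t$, $q(J)\mapsto J^\iinfty$ and its inverse are well-defined homomorphisms, hence mutually inverse isomorphisms $\cT_{2n}(m)^c_e\cong\cT^\iinfty_{2n}(m)$, proving the lemma. The step requiring the most care, and the main obstacle, is the twisted IHX comparison, precisely because it simultaneously invokes three kinds of relations: one legitimately substitutes the IHX relation of $\cL_n(m)$ \emph{inside} the inner-product arguments (justified only because $\langle\,,\,\rangle$ descends to $\cL_n(m)\times\cL_n(m)$), and one must use the interior-twist relation to convert the self-pairings $\langle H,H\rangle$ and $\langle X,X\rangle$ into $2q(H)$ and $2q(X)$. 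A minor point worth recording afterward is that, once these identifications hold, the twisted IHX relation no longer depends on the chosen vertex orientations, so the computed presentation is genuinely a presentation of the group defined in Definition~\ref{def:twisted-tree-groups}.
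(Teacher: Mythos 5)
Your proposal is correct and follows essentially the same route as the paper: match the generators and the relations family by family, with the only substantive step being the equivalence of the two twisted IHX relations via the interior-twist relation together with the IHX relation applied inside the inner product. The paper verifies that step slightly more economically by observing that the difference of the two relations is exactly $2\cdot q(I)-\langle I,H\rangle+\langle I,X\rangle=\langle I,I-H+X\rangle=0$, whereas you expand $I=H-X$ in the pairings and invoke the interior-twist relations for $H$ and $X$; both computations are valid and interchangeable.
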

\begin{proof}
The translation comes from setting $J^\iinfty=q(J)$ for rooted trees $J$ (and keeping unrooted trees unchanged). 
We need to show that the twisted IHX-relations in the original definition of $\mathcal T^\iinfty_{2n}(m)$ are equivalent to the twisted IHX-relations above, all other relations were already shown to agree.  This is very easy to see in the presence of the interior-twist relations: Together with the (untwisted) IHX-relations, they imply that
\[
0 = \langle I, I - H + X \rangle = \langle I, I \rangle - \langle I, H \rangle + \langle I, X \rangle = 2\cdot q(I) - \langle I, H \rangle + \langle I, X \rangle
\]
This last expression is exactly the difference between the two versions of the twisted IHX-relations.
\end{proof}

\begin{cor} \label{cor:quadratic} There is an isomorphism of symmetric quadratic groups
\[
\cT_{2n}(m)^c_e \cong  \cT^\iinfty_{2n}(m)
\]
which is the identity on $ \cT_{2n}(m)$ and takes $q(J)$ to $J^\iinfty$ for rooted trees $J$. 
The quadratic group structure on  $\cT^\iinfty_{2n}(m)$ is given by the homomorphisms $\cT_{2n}(m)\overset{p}{\to} \cT^\iinfty_{2n}(m)\overset{h}{\to}\cT_{2n}(m)$ which are uniquely characterized (for unrooted trees $t$ and rooted trees $J$) by
\[
p(t)=t  \quad \text{and} \quad h(t) = 2\cdot t, \ h(J^\iinfty) = \langle J,J \rangle 
\]
\end{cor}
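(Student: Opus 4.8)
The plan is to treat this corollary as a repackaging of Lemma~\ref{lem:quadratic-form} together with the explicit structure maps produced in the proof of Theorem~\ref{thm:commutative universal}, rather than as an independent computation.

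First I would record the underlying group isomorphism. By construction $\cT_{2n}(m)^c_e$ is the target $M^c_e$ of the universal symmetric quadratic refinement $q=\mu^c_{\langle\,,\,\rangle}$ of the inner product, so it is presented exactly as in Lemma~\ref{lem:commutative presentation} with $\lambda=\langle\,,\,\rangle$, $M=\cT_{2n}(m)$ and $A=\cL_n(m)$. Lemma~\ref{lem:quadratic-form} says precisely that, under the dictionary $q(J)\leftrightarrow J^\iinfty$ (and the identity on unrooted trees), this is also a presentation of $\cT^\iinfty_{2n}(m)$. Hence the assignment $\Phi\colon [m,0]\mapsto m$, $q(J)=[0,J]\mapsto J^\iinfty$ is a well-defined group isomorphism $\cT_{2n}(m)^c_e\xrightarrow{\ \cong\ }\cT^\iinfty_{2n}(m)$, and it is the identity on the subgroup $\cT_{2n}(m)$.

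Next I would upgrade this to an isomorphism of symmetric quadratic groups. Since $\langle\,,\,\rangle$ takes values in $\cT_{2n}(m)$ with trivial involution $*$, Theorem~\ref{thm:symmetric universal} guarantees that $(M^c_e\overset{h^c}{\to}\cT_{2n}(m)\overset{p^c}{\to}M^c_e)$ is a symmetric quadratic group, i.e.\ $h^cp^c=2\,\id=p^ch^c$; the structure maps from the proof of Theorem~\ref{thm:commutative universal}, specialized to the symmetric case, are $p^c(t)=[t,0]$ and $h^c[m,a]=2m+\langle a,a\rangle$. Transporting $(p^c,h^c)$ through the pair $(\Phi,\id_{\cT_{2n}(m)})$ and evaluating on the generators gives
\[
p(t)=\Phi([t,0])=t,\qquad h(t)=h^c[t,0]=2t,\qquad h(J^\iinfty)=h^c[0,J]=\langle J,J\rangle,
\]
which are exactly the formulas claimed. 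Because $\Phi$ is the identity on $M_{ee}$, the squares expressing compatibility of $(\Phi,\id)$ with $p,p^c$ and with $h,h^c$ commute by construction, so $(\Phi,\id)$ is a morphism, indeed an isomorphism, of symmetric quadratic groups.

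Finally I would dispatch the uniqueness clause: $\cT_{2n}(m)$ is generated by unrooted trees $t$, so $p(t)=t$ determines the homomorphism $p$; and by Lemma~\ref{lem:quadratic-form} the group $\cT^\iinfty_{2n}(m)$ is generated by the $t$ together with the $J^\iinfty$, so $h(t)=2t$ and $h(J^\iinfty)=\langle J,J\rangle$ determine $h$. I do not expect a genuine obstacle here: all the real content is already carried by Lemma~\ref{lem:quadratic-form}, and what remains is bookkeeping. The one point deserving a line of care is well-definedness of $h$ as a map out of $\cT^\iinfty_{2n}(m)$; this is automatic from the transport through $\Phi$, but can also be verified directly against the symmetry, interior-twist and twisted-IHX relations of Definition~\ref{def:twisted-tree-groups} (for instance $h$ sends the interior-twist relation $2J^\iinfty=\langle J,J\rangle$ to the identity $2\langle J,J\rangle=2\langle J,J\rangle$), confirming consistency.
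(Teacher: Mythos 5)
Your proposal is correct and matches the paper's (implicit) argument: the paper states this corollary without a separate proof, treating it exactly as you do --- as a direct repackaging of Lemma~\ref{lem:quadratic-form} together with the explicit structure maps $p^c_\lambda(m)=[m,0]$ and $h^c_\lambda[m,a]=m+m^*+\lambda(a,a)$ from the construction in Theorem~\ref{thm:commutative universal}, specialized to the symmetric case where $m^*=m$. Your added checks on uniqueness and well-definedness of $h$ are harmless bookkeeping consistent with the paper.
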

Note that Theorem~\ref{thm:exact} is now a direct consequence of Corollary~\ref{cor:exact}.


\end{document}